\theoremstyle{plain}
\newtheorem{theorem}{Theorem}
\newtheorem{lemma}{Lemma}
\newtheorem*{theo*}{Theorem}
\newtheorem{proposition}{Proposition}
\theoremstyle{definition}
\newtheorem{definition}{Definition}
\newtheorem*{definition*}{Definition}
\newtheorem{example}{Example}
\newtheorem{remark}{Remark}
\def\AA{{\mathbb A}}
\def\QQ{{\mathbb Q}}
\def\KK{{\mathbb K}}
\def\ZZ{{\mathbb Z}}
\def\XX{{\mathbb X}}
\def\NN{{\mathbb N}}
\def\Of{{\mathcal{O}}}
\def\cN{{\mathcal{N}}}
\def\cAT{{\mathcal{A}}}
\def\St{\mathop{\rm Stab}}
\def\div{\mathop{\rm div}}
\def\reg{\mathop{\rm reg}}
\def\sing{\mathop{\rm sing}}
\def\Ker{\mathop{\rm Ker}}
\def\Spec{\mathop{\rm Spec}}
\def\Aut{\mathop{\rm Aut}}
\def\AT{\mathop{\rm AT}}
\def\WDiv{\mathop{\rm WDiv}}
\def\PDiv{\mathop{\rm PDiv}}
\def\id{\mathop{\rm id}}
\def\Cl{\mathop{\rm Cl}}
\begin{document}
\sloppy
\title[Automorphisms of affine toric varieties]
{On orbits of the automorphism group on \\ an affine toric variety}
\author{Ivan Arzhantsev}
\thanks{The first author was partially supported by the Simons Grant and the Dynasty Foundation.}
\address{Department of Higher Algebra, Faculty of Mechanics and Mathematics,
Moscow State University, Leninskie Gory 1, GSP-1, Moscow, 119991,
Russia } \email{arjantse@mccme.ru}
\author{Ivan Bazhov}
%\thanks{}
\address{Department of Higher Algebra, Faculty of Mechanics and Mathematics,
Moscow State University, Leninskie Gory 1, GSP-1, Moscow, 119991,
Russia } \email{ibazhov@gmail.com}
\date{\today}
\begin{abstract}
Let $X$ be an affine toric variety. The total coordinates on $X$
provide a canonical presentation $\overline{X} \to X$ of $X$
as a quotient of a vector space $\overline{X}$
by a linear action of a quasitorus.
We prove that the orbits of the connected component
of the automorphism group $\Aut(X)$ on $X$ coincide with the Luna strata
defined by the canonical quotient presentation.
\end{abstract}
\subjclass[2010]{Primary 14M25, 14R20; \ Secondary 14J50, 14L30}
\keywords{Toric variety, Cox ring, automorphism, quotient, Luna stratification}
\maketitle
\section*{Introduction}

Every algebraic variety $X$ carries a canonical stratification by orbits of the automorphism
group $\Aut(X)$. The aim of this paper is to give several characterizations of this stratification
when $X$ is an affine toric variety.

In the case of a complete toric variety $X$ the group $\Aut(X)$ is a linear algebraic group.
It admits an explicit description in terms of combinatorial data defining $X$; see~\cite{De}
and~\cite{Cox}. The orbits of the connected component $\Aut(X)^0$ on the variety $X$
are described in~\cite{Ba}. It is proved there that two points $x,x'\in X$ are in the same
$\Aut(X)^0$-orbit if and only if the semigroups in the divisor class group $\Cl(X)$
generated by classes of prime torus invariant divisors that do not contain $x$ and $x'$
respectively, coincide.

We obtain an analogue of this result for affine toric
varieties. It turns out that in the affine case one may replace the semigroups mentioned above
by the groups generated by the same classes. This relates $\Aut(X)^0$-orbits on $X$ with
stabilizers of points in fibres of the canonical quotient presentation
$\pi: \overline{X} \to \overline{X}/\!/H_X \cong X$, where $H_X$ is a quasitorus
with the character group identified with $\Cl(X)$ and $\overline{X}$ is a
finite-dimensional $H_X$-module whose coordinate ring is the total coordinate ring
(or the Cox ring) of $X$; see~\cite{Cox}, \cite[Chapter~5]{CLS}, or Section~\ref{sec4}
for details. More precisely, our main result (Theorem~\ref{tmain}) states that
the collection of $\Aut(X)^0$-orbits on $X$ coincides with the Luna stratification of $X$
as the quotient space of the linear $H_X$-action on $\overline{X}$.
In particular, in our settings the Luna stratification is intrinsic in the sense of~\cite{KR}.
For connections between quotient presentations of an arbitrary affine variety,
the Luna stratification, and Cox rings, see~\cite{Ar}.

In contrast to the complete case, the automorphism group of an affine toric variety
is usually infinite-dimensional. An explicit description of the automorphism group of an affine toric surface in terms of free amalgamated products is given in~\cite{AZ}.
Starting from dimension three such a description is unknown. Another difference is that
in the affine case the open orbit of $\Aut(X)^0$ coincides with the smooth locus of $X$
\cite[Theorem~2.1]{AKZ}, while for a smooth complete toric variety $X$ the group $\Aut(X)^0$
acts on $X$ transitively if and only if $X$ is a product of projective spaces
\cite[Theorem~2.8]{Ba}.

In Section~\ref{sec1} we recall basic facts on automorphisms of algebraic varieties and define the connected component $\Aut(X)^0$. Section~\ref{sec2} contains a background on affine toric varieties. We consider
one-parameter unipotent subgroups in $\Aut(X)$ normalized by the acting torus (root subgroups).
They are in one-to-one correspondence with the so-called Demazure roots of the cone of the
variety $X$. Also we recall the technique developed in~\cite{AKZ}, which will be used later.
In Section~\ref{sec3} we discuss the Luna stratification on the quotient space of a rational
$G$-module, where $G$ is a reductive group. Necessary facts on Cox rings and canonical quotient
presentations of affine toric varieties are collected in Section~\ref{sec4}. We define the
Luna stratification of an arbitrary affine toric variety and give a characterization
of strata in terms of some groups of classes of divisors (Proposition~\ref{prop23}).
Our main result is proved in Section~\ref{sec5}. We illustrate it by an example. It
shows that although the group $\Aut(X)^0$ acts (infinitely)
transitively on the smooth locus $X^{\reg}$, it may be non-transitive
on the set of smooth points of the singular locus $X^{\sing}$, even when $X^{\sing}$
is irreducible. Finally, in Section~\ref{sec7} we prove collective infinite
transitivity on $X$ along the orbits of the subgroup of $\Aut(X)$ generated by
root subgroups and their replicas (Theorem~\ref{thcit}). Here we follow the approach developed in~\cite{AFKKZ}.

\section{Automorphisms of algebraic varieties}
\label{sec1}

Let $X$ be a normal algebraic variety over an algebraically closed
field $\KK$ of characteristic zero and $\Aut(X)$ be the automorphism group.
At the moment we consider $\Aut(X)$ as an abstract group and our aim
is to define the connected component of $\Aut(X)$ following \cite{Ra}.

\begin{definition}
A family $\{\phi_b\}_{b\in B}$ of automorphisms of a variety $X$, where
the parametrizing set $B$ is an algebraic variety, is an {\it algebraic family}
if the map $B\times X \to X$ given by $(b,x)\to\phi_b(x)$ is a morphism.
\end{definition}

If $G$ is an algebraic group and $G\times X \to X$ is a regular action, then
we may take $B=G$ and consider the algebraic family $\{\phi_g\}_{g\in G}$, where
$\phi_g(x)=gx$. So any automorphism defined by an element of $G$ is included in
an algebraic family.

\begin{definition} \label{def2}
The {\it connected component} $\Aut(X)^0$ of the group $\Aut(X)$ is the subgroup of
automorphisms that may be included in an algebraic family $\{\phi_b\}_{b\in B}$
with an (irreducible) rational curve as a base $B$ such that $\phi_{b_0}=\id_X$ for some
$b_0\in B$.
\end{definition}

\begin{remark}
It is also natural to consider arbitrary irreducible base $B$ in Definition~\ref{def2}.
But for our purposes related to toric varieties rational curves as bases are more suitable.
\end{remark}

It is easy to check that $\Aut(X)^0$ is indeed a subgroup; see~\cite{Ra}.

\begin{lemma}
Let $G$ be a connected linear algebraic group and $G\times X \to X$ be a regular action.
Then the image of $G$ in $\Aut(X)$ is contained in $\Aut(X)^0$.
\end{lemma}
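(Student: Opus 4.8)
The plan is to reduce everything to one-parameter subgroups and then use that $\Aut(X)^0$ is already known to be a subgroup of $\Aut(X)$. So it suffices to exhibit, for each element $g$ in a generating set of $G$, an algebraic family over a rational curve that contains $\phi_g$ and the identity.

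First I would record that the image of any one-parameter subgroup lands in $\Aut(X)^0$. Let $\lambda\colon \mathbb{G}_a \to G$ be a homomorphism of algebraic groups. Composing $\lambda \times \id_X$ with the action morphism $G \times X \to X$ yields a morphism $\AA^1 \times X \to X$, $(t,x)\mapsto \lambda(t)x$. By definition this is an algebraic family $\{\phi_{\lambda(t)}\}_{t\in \AA^1}$ whose base $B=\AA^1$ is a rational curve, and $\phi_{\lambda(0)}=\id_X$ since $\lambda(0)=e$. Hence every $\phi_{\lambda(t)}$ lies in $\Aut(X)^0$. The identical argument with $\mathbb{G}_m \cong \AA^1\setminus\{0\}$ (again a rational curve) in place of $\mathbb{G}_a$, using the base point $t=1$, shows that the image of any one-parameter multiplicative subgroup lies in $\Aut(X)^0$ as well.

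Next I would invoke the structure theory of $G$. Since $\KK$ has characteristic zero, the Levi decomposition gives $G = R_u(G) \rtimes L$, where $R_u(G)$ is the unipotent radical and $L$ is reductive. A connected unipotent group is generated by its one-parameter subgroups isomorphic to $\mathbb{G}_a$, while a connected reductive group is generated by a maximal torus together with its root subgroups $U_\alpha \cong \mathbb{G}_a$; a maximal torus, being isomorphic to $(\mathbb{G}_m)^n$, is in turn generated by one-parameter multiplicative subgroups. Therefore $G$ is generated, as an abstract group, by the images of its one-parameter additive and multiplicative subgroups.

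Combining the two steps, every $g\in G$ is a finite product of elements each lying in some one-parameter subgroup, hence each lying in $\Aut(X)^0$ by the first step; since $\Aut(X)^0$ is a subgroup of $\Aut(X)$, the product $g$ belongs to $\Aut(X)^0$ as well. The one point demanding care — which I would isolate as the key input — is the generation of $G$ by one-parameter subgroups, and this is exactly where the hypotheses that $G$ is connected and that the characteristic of $\KK$ is zero (guaranteeing the Levi decomposition) are used. The remaining verifications, that $\lambda\times\id_X$ followed by the action is a morphism and that $\AA^1$ and $\AA^1\setminus\{0\}$ are rational, are routine.
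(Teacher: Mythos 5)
Your proof is correct, but it follows a genuinely different route from the paper's. The paper invokes the fact that every element $g$ of a connected linear algebraic group lies in a Borel subgroup (Humphreys, Theorem~22.2), notes that a connected solvable group is isomorphic as a variety to $(\KK^{\times})^r\times\KK^m$, and hence connects $g$ to the identity by a single rational curve inside that Borel; composing with the action gives one algebraic family over a rational curve containing both $\phi_g$ and $\id_X$. This directly verifies Definition~\ref{def2} for each $\phi_g$ individually, with no appeal to the group structure of $\Aut(X)^0$. You instead decompose $G$ via the Levi decomposition, reduce to one-parameter additive and multiplicative subgroups (each of which is literally a rational curve through the identity, so the family is immediate), and then assemble $\phi_g$ as a finite product, invoking the fact that $\Aut(X)^0$ is a subgroup. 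That last step is the real dependency of your argument: the closure of $\Aut(X)^0$ under composition is asserted in the paper (with a reference to Ramanujam) but is not entirely obvious from Definition~\ref{def2}, since composing two families over rational curves does not immediately produce a single family over one rational curve. Since the paper states this subgroup property just before the lemma, your use of it is legitimate; but the paper's Borel-subgroup argument has the advantage of bypassing it entirely, while your argument has the advantage that the required rational curves (the one-parameter subgroups themselves) are completely explicit, whereas the paper's ``the assertion follows'' quietly leaves to the reader the construction of a rational curve through $g$ and $e$ inside $(\KK^{\times})^r\times\KK^m$.
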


\begin{proof}
We have to prove that every $g\in G$ can be connected with
the unit by a rational curve. By~\cite[Theorem~22.2]{Hu},
an element $g$ is contained in a Borel subgroup of $G$.
As any connected solvable linear algebraic group, a Borel subgroup is isomorphic (as a variety) to
$(\KK^{\times})^r\times\KK^m$ with some non-negative integers $r$ and $m$.
The assertion follows.
\end{proof}

Denote by $\WDiv(X)$ the group of Weil divisors on a variety $X$ and by
$\PDiv(X)$ the subgroup of principal divisors, i.e.,
$$
\PDiv(X)=\{\div(f) \ ; \ f \in \KK(X)^{\times}\} \cup \{0\}.
$$
Then the divisor class group of $X$ is defined as $\Cl(X):=\WDiv(X)/\PDiv(X)$.
The image of a divisor $D$ in $\Cl(X)$ is denoted by $[D]$ and is called the {\it class}
of $D$. Any automorphism $\phi\in\Aut(X)$ acts naturally on the set of prime divisors
and thus on the group $\WDiv(X)$. Under this action a principal divisor goes
to a principal one and we obtain an action of $\Aut(X)$ on $\Cl(X)$.

Recall that the local class group of $X$ in a point $x$ is the factor group
$$
\Cl(X,x) := \WDiv(X) / \PDiv(X, x),
$$
where $\PDiv(X,x)$ is the group of Weil divisors on $X$ that are principle in some
neighbourhood of the point $x$. We have a natural surjection $\Cl(X) \to \Cl(X,x)$.
Let us denote by $\Cl_x(X)$ the kernel of this homomorphism, i.e., $\Cl(X,x)=\Cl(X)/\Cl_x(X)$.
Equivalently, $\Cl_x(X)$ consists of classes that have a representative whose support
does not contain $x$.

\smallskip

We obtain the following result.

\begin{lemma} \label{lemlcg}
Assume that an automorphism $\phi\in\Aut(X)$ acts on $\Cl(X)$ trivially. Then $\Cl_x(X)=\Cl_{\phi(x)}(X)$ for any $x\in X$.
%In particular, this is the case for $\phi\in\Aut(X)^0$.
\end{lemma}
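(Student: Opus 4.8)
The plan is to establish a sharper, equivariant statement from which the lemma is immediate: for \emph{any} $\phi\in\Aut(X)$ (not just one acting trivially on $\Cl(X)$) and any $x\in X$, the induced action $\phi_*$ on $\Cl(X)$ satisfies $\phi_*(\Cl_x(X))=\Cl_{\phi(x)}(X)$. Granting this, the hypothesis that $\phi$ acts trivially on $\Cl(X)$ means $\phi_*=\id$ on $\Cl(X)$, so the left-hand side collapses to $\Cl_x(X)$ and we obtain $\Cl_x(X)=\Cl_{\phi(x)}(X)$ at once. Thus the real content is the equivariance of the subgroups $\Cl_x(X)$ under the $\Aut(X)$-action, and I would isolate this as the heart of the argument.

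To prove the equivariance I would use the intrinsic description recalled just before the lemma: a class lies in $\Cl_x(X)$ exactly when it has a representative $D$ with $x\notin\operatorname{Supp}(D)$. The key geometric observation is that $\phi$ permutes prime divisors by $D\mapsto\phi(D)$, so the support of the pushed-forward divisor $\phi_*(D)$ is precisely $\phi(\operatorname{Supp}(D))$; consequently $x\notin\operatorname{Supp}(D)$ holds if and only if $\phi(x)\notin\operatorname{Supp}(\phi_*(D))$. Combining this with the support criterion gives the inclusion $\phi_*(\Cl_x(X))\subseteq\Cl_{\phi(x)}(X)$: starting from $[D]\in\Cl_x(X)$ with $x\notin\operatorname{Supp}(D)$, the class $\phi_*[D]=[\phi_*(D)]$ is represented by a divisor whose support avoids $\phi(x)$. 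For the reverse inclusion I would apply the same inclusion to the automorphism $\phi^{-1}$ at the point $\phi(x)$, obtaining $\phi^{-1}_*(\Cl_{\phi(x)}(X))\subseteq\Cl_x(X)$, and then push forward by the group isomorphism $\phi_*$.

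I expect the only point requiring care to be the compatibility of $\phi_*$ with the representative-based description of $\Cl_x(X)$: one must check that passing to divisor classes does not spoil the support condition. This turns out not to be a genuine obstacle, precisely because $\Cl_x(X)$ is defined by the \emph{existence} of a representative with good support and $\phi_*$ is a group isomorphism $\Cl(X)\to\Cl(X)$ carrying representatives to representatives; the support-tracking identity $\operatorname{Supp}(\phi_*(D))=\phi(\operatorname{Supp}(D))$ then makes the correspondence automatic. Hence the whole proof is a short formal deduction once the equivariance of the local-class-group kernels is in place.
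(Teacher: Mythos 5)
Your proof is correct, and it is exactly the argument the paper has in mind: the lemma is stated without proof, being treated as an immediate consequence of the naturality of the $\Aut(X)$-action on $\WDiv(X)$ and $\Cl(X)$ together with the support-based description of $\Cl_x(X)$ given just before the statement. Isolating the equivariance $\phi_*(\Cl_x(X))=\Cl_{\phi(x)}(X)$ and then specializing to $\phi_*=\id$ is precisely the right way to fill in the omitted details.
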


\section{Affine toric varieties and Demazure roots}
\label{sec2}

A {\it toric variety} is a normal algebraic variety $X$ containing
an algebraic torus $T$ as a dense open subset such that the action
of $T$ on itself extends to a regular action of $T$ on $X$.
Let $N$ be the lattice of one-parameter subgroups $\lambda: \KK^{\times} \to T$ and
$M=\text{Hom}(N,\ZZ)$ be the dual lattice. We identify $M$
with the lattice of characters $\chi: T\to\KK^{\times}$, and
the pairing $N\times M \to \ZZ$ is given by
$$
(\lambda,\chi) \to
\langle \lambda,\chi\rangle,\quad \text{where} \quad \chi(\lambda(t)):=
t^{\langle \lambda,\chi\rangle}.
$$

Let us recall a correspondence between affine toric varieties and
rational polyhedral cones. Let $\sigma$ be a polyhedral cone in the rational
vector space $N_\QQ:=N\otimes_{\ZZ} \QQ$ and $\sigma^{\vee}$ be the dual cone
in $M_\QQ$. Then the affine variety
$$
X_{\sigma} := \Spec \KK[\sigma^{\vee}\cap M]
$$
is toric and any affine toric variety arises this way, see~\cite{CLS}, \cite{Fu}.
The $T$-orbits on $X_{\sigma}$ are in order-reversing bijection with faces
of the cone $\sigma$.
If $\sigma_0\preceq\sigma$ is a face, then we denote the corresponding $T$-orbit
by $\Of_{\sigma_0}$. In particular, $\Of_{\sigma}$ is
a closed $T$-orbit and, if $o$ is the minimal face of $\sigma$, then
$X_0$ is an open $T$-orbit on $X_{\sigma}$.

An affine variety $X$ is called {\it non-degenerate} if any regular invertible function on
$X$ is constant. If $X$ is toric, this condition means that there are no non-trivial
decompositions $T=T_1\times T_2$ and $X=X_0\times T_2$, where $X_0$ is an affine toric variety
with acting torus $T_1$. If $X=X_{\sigma}$, then $X$ is non-degenerate if and only if
the cone $\sigma$ spans $N_\QQ$ or, equivalently, the cone $\sigma^{\vee}$ is pointed.

Let us consider for a moment the case of an arbitrary toric variety $X$.
Recall that $X$ contains a finite number of prime $T$-invariant divisors
$D_1,\ldots,D_m$. (In the affine case they are in bijection
with rays of the cone $\sigma$.) It is well known that the group $\Cl(X)$ is
generated by classes of these divisors. Let us associate with a $T$-orbit $\Of$
on $X$ the set of all prime $T$-invariant divisors
$D(\Of) := \{D_{i_1},\ldots,D_{i_k}\}$
that do not contain $\Of$. Denote by $G(\Of)$ the subgroup of $\Cl(X)$
generated by the classes of divisors from $D(\Of)$.

\begin{proposition} \label{pr1}
Let $X$ be a toric variety and $x\in X$. Then
$\Cl_x(X)=G(T\cdot x)$.
\end{proposition}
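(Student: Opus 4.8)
The plan is to identify both $\Cl_x(X)$ and $G(T\cdot x)$ with kernels of restriction maps on class groups and to reduce the whole statement to a single injectivity claim on an affine chart. First I would fix the orbit $\Of := T\cdot x$ and the cone $\sigma_0$ with $x\in\Of_{\sigma_0}$. Writing $v_\rho\in N$ for the primitive generator of a ray $\rho$, the invariant prime divisor $D_\rho=\overline{\Of_\rho}$ contains $\Of_{\sigma_0}$ precisely when $\rho\preceq\sigma_0$; hence $D(\Of)=\{D_\rho : \rho\not\preceq\sigma_0\}$ and $G(\Of)=\langle [D_\rho] : \rho\not\preceq\sigma_0\rangle$. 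This already gives the trivial inclusion $G(\Of)\subseteq\Cl_x(X)$, since each generator $[D_\rho]$ with $\rho\not\preceq\sigma_0$ is represented by $D_\rho$ itself, whose support avoids $x$, and $\Cl_x(X)$ is a subgroup. The substance is the reverse inclusion, which I would obtain together with the first by a kernel computation.

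I would pass to the $T$-invariant affine chart $U:=\Spec\KK[\sigma_0^\vee\cap M]$, in which $\Of_{\sigma_0}$ is the closed orbit and $x$ therefore a closed-orbit point. Two identifications organize the argument. On one hand, by the standard restriction exact sequence for divisor class groups \cite{CLS}, the kernel of $\Cl(X)\to\Cl(U)$ is generated by the classes of the codimension-one components of $X\setminus U$; these components are exactly the $D_\rho$ with $\rho\not\preceq\sigma_0$, so $\ker(\Cl(X)\to\Cl(U))=G(\Of)$. On the other hand the local class group is computed in the chart, $\Cl(X,x)=\Cl(U,x)$, and the natural surjection $\Cl(X)\to\Cl(X,x)$ factors as $\Cl(X)\to\Cl(U)\to\Cl(U,x)$; thus $\Cl_x(X)=\ker(\Cl(X)\to\Cl(U,x))$. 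Consequently, once I show that $\Cl(U)\to\Cl(U,x)$ is injective, the two kernels agree and I get $\Cl_x(X)=\ker(\Cl(X)\to\Cl(U))=G(\Of)$ in one stroke.

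To prove this injectivity I would take a class killed by $\Cl(U)\to\Cl(U,x)$ and, using that $\Cl(U)$ is generated by the $[D_\rho]$ with $\rho\preceq\sigma_0$, represent it by a $T$-invariant divisor $D=\sum_{\rho\preceq\sigma_0}a_\rho D_\rho$ that is principal near $x$, say $D=\div(f)$ on a neighbourhood $V\ni x$. The point is to upgrade this local principal generator to a character: decomposing $f$ into $T$-eigencomponents and using that only the characters $\chi^u$ with $u\in\sigma_0^\perp$ are invertible near a point of the closed orbit (and that such $u$ satisfy $\langle v_\rho,u\rangle=0$ for every $\rho\preceq\sigma_0$), I would isolate a single character $\chi^u$ realizing $D$ near $x$, forcing $a_\rho=\langle v_\rho,u\rangle$ for all $\rho\preceq\sigma_0$ and hence $[D]=0$ in $\Cl(U)$. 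Equivalently, this is the solvability over $M$ of the linear system $\langle v_\rho,u\rangle=a_\rho$ for $\rho\preceq\sigma_0$, and I expect this passage from ``principal near $x$'' to ``$T$-equivariantly principal'' — grounded in the attracting behaviour of the torus action toward the closed orbit — to be the main obstacle. Granting it, the kernel identifications above yield $\Cl_x(X)=G(T\cdot x)$, completing the proof.
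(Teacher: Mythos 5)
Your reduction is clean and genuinely different in architecture from the paper's argument: you localize to the affine chart $U=\Spec\KK[\sigma_0^\vee\cap M]$, identify $G(T\cdot x)$ with $\ker(\Cl(X)\to\Cl(U))$ via the excision sequence, identify $\Cl_x(X)$ with $\ker(\Cl(X)\to\Cl(U,x))$, and thereby reduce the whole proposition to the injectivity of $\Cl(U)\to\Cl(U,x)$ for $x$ in the closed orbit of $U$ (where, after translating by $T$, you may take $x$ to be the distinguished point). All of these identifications are correct. The paper instead stays global: it takes a representative $D'$ of the class whose support avoids $x$ and uses the structure of rational $T$-module on $\Gamma(X,D')$ to replace $D'$ by a $T$-invariant representative avoiding $x$. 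Your route trades that for the statement that a $T$-invariant divisor on an affine toric variety which is principal near the distinguished point is globally principal --- a standard fact about local class groups of affine toric varieties (cf.~\cite{CLS}), so citing it would close the proof.

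As written, however, the crux is not proved, and your sketch of it has a real flaw. You propose to ``decompose $f$ into $T$-eigencomponents,'' where $f$ is a local equation for $D$ on a neighbourhood $V\ni x$. But $V$ need not be $T$-invariant, and $\KK(U)$ is not a rational $T$-module, so this decomposition does not exist as stated; this is exactly why the paper decomposes elements of the rational $T$-module $\Gamma(X,D')$ rather than a single rational function. The correct repair is a graded Nakayama-type argument on the $M$-graded module $\Gamma(U,\mathcal{O}_U(D))$ localized at the distinguished point: principality near $x$ yields a single generator, homogeneity of the maximal ideal there lets you choose the generator to be a character $\chi^{-u}$, and then $a_\rho=\langle v_\rho,u\rangle$ for all $\rho\preceq\sigma_0$, so $[D]=0$ in $\Cl(U)$. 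That is the same mechanism as the paper's semiinvariant argument; the step you label ``the main obstacle'' is the entire content of the proposition and must be carried out (or explicitly cited), not granted.
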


\begin{proof}
Take a class $[D]\in\Cl(X)$ with a $T$-invariant representative $D$.
By definition, $[D]$ is contained in $\Cl_x(X)$ if and only if it contains
a representative $D'\in\WDiv(X)$ whose support does not pass through $x$.
In particular, $D' = D + \div(h)$ for some $h\in\KK(X)$.
Consider the decomposition $D'=D'_+-D'_-$, where $D'_+$ and $D'_-$ are effective.
This allows to deal with only the case where $D'$ is effective.

Suppose that $[D]\in\Cl_x(X)$.
We claim that there exists a $T$-invariant effective divisor $D''\in [D]$
whose support does not pass through $x$. Assume this is not the case and
consider the vector space
$$
\Gamma(X, D') = \{ f\in\KK(X)^{\times} \ ; \ D' + \div(f) \ge 0\} \cup \{0\}.
$$
Then $\Gamma(X,D) = h\Gamma(X,D')$ and the subspace $\Gamma(X,D)$ in $\KK(X)$
is invariant with respect to the action $(t\cdot f)(x):=f(t^{-1}\cdot x)$.
It is well known that $\Gamma(X,D)$ is a rational $T$-module. We can transfer
the structure of rational $T$-module to $\Gamma(X,D')$ by the formula
$$
t\circ f : = h^{-1} t\cdot (hf) \quad \text{for \ any} \quad t\in T, \ f\in\Gamma(X,D').
$$
Then a function $f\in\Gamma(X,D')$ is $T$-semiinvariant if and only if the divisor
$D'+\div(f)$ is $T$-invariant. Since $D'$ is effective, the support of $D'+\div(f)$
passes through $x$ if and only if $f(x)=0$. By our assumption, this is the case for
all $T$-semiinvariants in $\Gamma(X,D')$. But any vector in $\Gamma(X,D')$
is a sum of semiinvariants. Thus the support of any effective divisor in $[D]$
contains $x$. This is a contradiction, because the support of $D'$ does not
pass through $x$.

Since $D''$ is a sum of prime $T$-invariant divisors not passing through $x$,
the class $[D]$, and thus the group $\Cl_x(X)$, is contained in the group
$G(T\cdot x)$. The opposite inclusion is obvious.
\end{proof}

\begin{lemma}
Let $X$ be an affine toric variety.
Then $\Aut(X)^0$ is in the kernel of the action of $\Aut(X)$ on $\Cl(X)$.
\end{lemma}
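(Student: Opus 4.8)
The plan is to exploit that for an affine toric variety $X=X_\sigma$ the class group $\Cl(X)$ is \emph{finitely generated} --- it is generated by the finitely many classes $[D_1],\dots,[D_m]$ of the prime $T$-invariant divisors --- so that $\Aut(\Cl(X))$ is a discrete group, and then to argue that the induced action on $\Cl(X)$ cannot vary along an algebraic family based on a curve. Concretely, fix $\phi\in\Aut(X)^0$. By Definition~\ref{def2} there is an algebraic family $\{\phi_b\}_{b\in B}$ over an irreducible rational curve $B$ with $\phi_{b_0}=\id_X$ for some $b_0$, and we may write $\phi=\phi_{b_1}$ for some $b_1\in B$. Since the $[D_i]$ generate $\Cl(X)$, it suffices to prove $[\phi_{b_1}(D_i)]=[D_i]$ for every $i$. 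Replacing $B$ by a smooth affine open subset of its normalization containing lifts of $b_0$ and $b_1$ and pulling the family back, I may assume that $B$ is an open subset of $\AA^1$; in particular $\Cl(B)=0$.

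The key construction is to spread the family out over the base. Consider the map $\Psi\colon B\times X\to B\times X$, $\Psi(b,x)=(b,\phi_b(x))$, which is a morphism because the family is algebraic; it is bijective, and as $B\times X$ is a normal variety over a field of characteristic zero, $\Psi$ is an isomorphism. For each $i$ put $\mathcal Z_i:=\Psi(B\times D_i)$. As the image of a prime divisor under an automorphism, $\mathcal Z_i$ is a prime divisor on $B\times X$; it dominates $B$, and its fibre over a point $b$ is $\phi_b(D_i)$. Writing $i_b\colon X\hookrightarrow B\times X$, $x\mapsto (b,x)$, for the inclusion of the fibre and $i_b^*\colon\Cl(B\times X)\to\Cl(X)$ for the induced restriction, we therefore have $i_b^*[\mathcal Z_i]=[\phi_b(D_i)]$ for all $b\in B$.

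It remains to observe that the restriction $i_b^*$ is independent of $b$. Let $p_X\colon B\times X\to X$ be the projection. Since $\Cl(B)=0$ and $B$ is an open subset of $\AA^1$, the pullback $p_X^*\colon\Cl(X)\to\Cl(B\times X)$ is an isomorphism: this is the $\AA^1$-invariance of the class group of a normal variety, together with the fact that each removed fibre $\{p\}\times X$ is a principal divisor on $\AA^1\times X$. On the other hand $p_X\circ i_b=\id_X$, so $i_b^*\circ p_X^*=\id$; hence every $i_b^*$ equals the inverse of the isomorphism $p_X^*$, and they all coincide. In particular $[\phi_{b_1}(D_i)]=i_{b_1}^*[\mathcal Z_i]=i_{b_0}^*[\mathcal Z_i]=[\phi_{b_0}(D_i)]=[D_i]$, as desired. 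The step I expect to require the most care is this last one --- identifying $\Cl(B\times X)$ with $\Cl(X)$ and checking that the two fibre restrictions agree --- since this is precisely the mechanism that forbids the discrete action on $\Cl(X)$ from jumping along the connected base $B$; verifying that $\mathcal Z_i$ is an honest prime divisor with reduced fibre $\phi_b(D_i)$ is a secondary point resting on the injectivity of $\Psi$.
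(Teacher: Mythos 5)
Your argument is correct and follows the same overall strategy as the paper's proof: pass to the product $B\times X$ over a rational curve base with $\Cl(B)=0$, realize the family of divisors $\{\phi_b(D_i)\}_b$ as (the fibres of) a single divisor there, and conclude that the fibrewise classes cannot vary. The only place you genuinely diverge is in justifying that the fibre restrictions agree: the paper uses the torus acting through the second factor to replace any divisor on $B\times X$, up to linear equivalence, by a $T$-invariant one, which is then a sum of vertical divisors (principal, as $\Cl(B)=0$) and horizontal divisors $B\times D'$, and restricts the correcting rational function to each fibre; you instead invoke the homotopy invariance $\Cl(X\times\AA^1)\cong\Cl(X)$ for normal $X$, which uses no toric structure and would prove the statement for any normal variety with this kind of family, at the cost of quoting a standard external fact where the paper's argument is self-contained in the toric setting. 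Two small points worth a word in a final write-up: the well-definedness of $i_b^*$ on all of $\Cl(B\times X)$ should be handled by choosing the representative $p_X^*E$ of a class (which never contains a fibre) --- the paper's proof is equally terse at the corresponding spot --- and for $\mathcal{Z}_i$ you only really need that it is a prime divisor not containing any fibre, which already follows from injectivity of $\Psi$ and a dimension count, so the appeal to Zariski's main theorem, while valid, can be avoided.
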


\begin{proof}
Let $\{\phi_b\}_{b\in B}$ be an algebraic family of automorphisms with
$\phi_{b_0}=\id_X$ for some $b_0\in B$. We may assume that $B$ is an affine
rational curve. In particular, $\Cl(B)=0$. Consider the morphism
$\Phi\colon B\times X \to X$ given by $(b,x)\to\phi_b(x)$.
We have to show that for any divisor $D$ in $X$ the intersections of $\Phi^{-1}(D)$
with fibres $\{b\}\times X$ are linearly equivalent to each other. The torus $T$ acts
on the variety $B\times X$ via the action on the second component. It is well known that
every divisor on $B\times X$ is linearly equivalent to a $T$-invariant one.
Every prime $T$-invariant divisor on $B\times X$ is either vertical, i.e., is a product
of a prime divisor in $B$ and $X$, or horizontal, i.e., a product of $B$ and a prime
$T$-invariant divisor in $X$. Every vertical divisor is principal. So the divisor
$\Phi^{-1}(D)$ plus some principal divisor $\div(f)$ is a sum of horizontal divisors.
Restricting the rational function $f$ to every fibre $\{b\}\times X$ we obtain that
the intersections of $\Phi^{-1}(D)$ with fibres are linearly equivalent to each other.
\end{proof}

Our next aim is to present several facts on automorphisms of affine toric varieties.
Denote by $\sigma(1)$ the set of rays of a cone $\sigma$ and by $v_\tau$ the primitive
lattice vector on a ray $\tau$.

\begin{definition}
An element $e\in M$ is a called a {\it Demazure root} of a polyhedral
cone $\sigma$ in $N_\QQ$ if there is $\tau\in\sigma(1)$ such that
$\langle v_\tau, e\rangle =-1$ and $\langle v_{\tau'}, e\rangle \ge 0$
for all $\tau'\in\sigma(1)\setminus \{\tau\}$.
\end{definition}

Let $\mathcal{R}=\mathcal{R}(\sigma)$ be the set of all Demazure roots of a cone $\sigma$.
For any root $e\in\mathcal{R}$ denote by $\tau_e$ (resp. $v_e$)
the ray $\tau$ (resp. primitive vector $v_\tau$) with $\langle v_\tau, e\rangle =-1$.
Let $\mathcal{R}_\tau$ be the set of roots $e$ with $\tau_e=\tau$.
Then
$$
\mathcal{R} = \cup_{\tau\in\sigma(1)} \mathcal{R}_\tau.
$$
One can easily check that every set $\mathcal{R}_\tau$ is infinite.

With any root $e$ one associates a one-parameter subgroup $H_e$ in
the group $\Aut(X)$ such that $H_e \cong (\KK, +)$ and $H_e$ is normalized
by $T$, see~\cite{De}, \cite{Oda} or \cite[Section~2]{AKZ} for an explicit
form of $H_e$. Moreover, every one-parameter unipotent subgroup of $\Aut(X)$
normalized by $T$ has the form $H_e$ for some root $e$.

The following result is obtained in \cite[Proposition~2.1]{AKZ}.

\begin{proposition} \label{prcon}
Let $e\in\mathcal{R}$. For every point $x\in X\setminus X^{H_e}$ the orbit
$H_e\cdot x$ meets exactly two $T$-orbits $\Of_1$ and $\Of_2$. Moreover, $\Of_2\subseteq\overline{\Of_1}$ and $\dim\Of_1 = 1 + \dim\Of_2$.
\end{proposition}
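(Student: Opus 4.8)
The plan is to translate everything into the combinatorics of the cone $\sigma$ via the \emph{support face} of a point, and then to exploit the single extra $\KK^{\times}$-action hidden inside the torus $T$. Recall first the explicit shape of the root subgroup: $H_e$ is the one-parameter group $\exp(s\,\partial_e)$ attached to the locally nilpotent derivation $\partial_e(\chi^u)=\langle v_e,u\rangle\,\chi^{u+e}$ of $\KK[\sigma^{\vee}\cap M]$ (see \cite[Section~2]{AKZ}), so that on characters
$$
h_e(s)^{*}\chi^u \;=\; \sum_{k\ge 0}\binom{\langle v_e,u\rangle}{k}\,s^k\,\chi^{u+ke},
$$
a finite sum since $\langle v_e,u\rangle\ge 0$ for $u\in\sigma^{\vee}$, and each $u+ke$ with $0\le k\le\langle v_e,u\rangle$ again lies in $\sigma^{\vee}\cap M$ by the defining inequalities $\langle v_{\tau_e},e\rangle=-1$, $\langle v_{\tau'},e\rangle\ge 0$ of a Demazure root. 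To a point $y\in X$ I attach its support face $F_y:=\{u\in\sigma^{\vee}\cap M:\chi^u(y)\ne 0\}$, a face of the monoid $\sigma^{\vee}\cap M$; two points lie in the same $T$-orbit exactly when their support faces coincide, and $\dim(T\cdot y)=\dim F_y$. Since $H_e\cong(\KK,+)$ is unipotent, the orbit $C:=H_e\cdot x$ of a point $x\notin X^{H_e}$ is isomorphic to $\AA^1$; being a non-constant rational curve it cannot be contained in a single $T$-orbit $\cong(\KK^{\times})^d$, so $C$ meets at least two $T$-orbits.

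Next I would pin down the two orbits using the scaling action coming from $T$. Choose $x_0\in C$ lying in a $T$-orbit $\Of_2$ of \emph{minimal} dimension among those met by $C$, and put $F_2:=F_{x_0}$. The crucial step is to produce a one-parameter subgroup $\lambda\colon\KK^{\times}\to T$ that stabilizes $x_0$ and satisfies $\langle\lambda,e\rangle\ne 0$. Such a $\lambda$ exists precisely when $e\notin\mathop{\rm span}(F_2)$, because the stabilizer of $x_0$ is the subtorus with cocharacter lattice $(\mathop{\rm span}F_2)^{\perp}\cap N$. Granting this, the normalization relation $\lambda(t_0)\,h_e(s)\,\lambda(t_0)^{-1}=h_e(t_0^{\langle\lambda,e\rangle}s)$ together with $\lambda(t_0)x_0=x_0$ yields $\lambda(t_0)\cdot h_e(s)x_0=h_e(t_0^{\langle\lambda,e\rangle}s)\,x_0$; thus $\lambda(\KK^{\times})$ acts on $C\cong\AA^1$ by scaling the parameter, with the single fixed point $x_0$ at $s=0$. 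Since $\lambda(\KK^{\times})\subseteq T$ preserves every $T$-orbit, all points $h_e(s)x_0$ with $s\ne 0$ lie in one and the same orbit $\Of_1$. Hence $C$ meets exactly the two orbits $\Of_2$ (the point $x_0$) and $\Of_1$ (all $s\ne 0$).

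It then remains to compare $\Of_1$ and $\Of_2$. Letting $s\to 0$ shows $x_0=\lim_{s\to 0}h_e(s)x_0\in\overline{\Of_1}$, so $\Of_2\subseteq\overline{\Of_1}$ because orbit closures are unions of orbits. For the dimensions I would compute the generic support face $F_1$ along $C$ and check that its $\KK$-linear span equals $\mathop{\rm span}(F_2)\oplus\KK e$: the inclusion ``$\subseteq$'' is immediate from the displayed formula, since every $u\in F_1$ satisfies $u+ke\in F_2$ for some $k$; and ``$\supseteq$'' follows by extracting, from the non-fixedness of $x_0$, an element $w\in F_2$ with $w-e\in\sigma^{\vee}\cap M$, whence $w-e\in F_1$ and $e\in\mathop{\rm span}(F_1)$. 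As $e\notin\mathop{\rm span}(F_2)$, this span has dimension $\dim F_2+1$, giving $\dim\Of_1=\dim F_1=\dim F_2+1=\dim\Of_2+1$.

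The main obstacle is the combinatorial input behind the last two paragraphs: that for the minimal base point $x_0$ one indeed has $e\notin\mathop{\rm span}(F_2)$, and that the support face grows by exactly the line $\KK e$. Both reduce to careful bookkeeping with the root inequalities, which govern precisely when $w\mapsto w-ke$ remains inside $\sigma^{\vee}$. Here the minimality of $\dim\Of_2$ does the decisive work: were $e\in\mathop{\rm span}(F_2)$, the same span computation would force $F_1=F_2$, i.e. $C$ would lie in the single orbit $\Of_2$, contradicting the first paragraph. Thus the dichotomy ``generic versus special point of $C$'' is resolved by choosing the base point in the smallest stratum, which is exactly where the stabilizer is large enough to supply the scaling cocharacter $\lambda$.
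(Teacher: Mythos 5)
The paper does not actually prove this proposition: it is imported verbatim from \cite[Proposition~2.1]{AKZ}, so there is no internal argument to measure yours against. Judged on its own terms, your proof is correct in outline and in every computation I can check: the displayed formula for $h_e(s)^{*}\chi^u$ is the standard one; the dictionary between $T$-orbits and support faces is sound; the inclusion $F_y\subseteq F_{\mathrm{gen}}$ for \emph{every} $y\in C$ holds because the constant term of the polynomial $s\mapsto\chi^u\bigl(h_e(s)y\bigr)$ is $\chi^u(y)$; the characterization $u\in F_{\mathrm{gen}}\Leftrightarrow u+ke\in F_{x_0}$ for some $0\le k\le\langle v_e,u\rangle$ gives $\mathop{\rm span}F_{x_0}\subseteq\mathop{\rm span}F_{\mathrm{gen}}\subseteq\mathop{\rm span}F_{x_0}+\QQ e$; and your element $w\in F_2$ with $w-e\in\sigma^{\vee}\cap M$ really does satisfy $w-e\in F_1$, since the coefficient of $s$ in $\chi^{w-e}\bigl(h_e(s)x_0\bigr)$ is $(\langle v_e,w\rangle+1)\,\chi^{w}(x_0)\ne 0$. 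The normalization relation, the scaling of the orbit parameter by $\lambda$, the limit $s\to 0$, and the final dimension count are all fine.

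The one step you should write out rather than gesture at is the deferred claim $e\notin\mathop{\rm span}(F_2)$; as phrased (``the same span computation would force $F_1=F_2$'') it reads as circular, though it is not once two small facts are supplied. First, a face of $\sigma^{\vee}$ is recovered from its linear span (any face $F$ satisfies $F=\sigma^{\vee}\cap\mathop{\rm span}F$), so if $e\in\mathop{\rm span}F_2$ then the sandwich of spans above collapses and genuinely yields $F_1=F_2$, not merely equality of spans. Second, to conclude that $C$ then lies in a single $T$-orbit you must combine $F_y\subseteq F_1=F_2$ for all $y\in C$ with the minimality of $\dim\mathop{\rm span}F_2$, which forces $F_y=F_2$ for every $y\in C$; only then do you contradict your first paragraph. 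With these two sentences added, the existence of the cocharacter $\lambda\in(\mathop{\rm span}F_2)^{\perp}\cap N$ with $\langle\lambda,e\rangle\ne 0$ is secured and the proof is complete.
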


A pair of $T$-orbits $(\Of_1,\Of_2)$
as in Proposition~\ref{prcon} is called {\it $H_e$-connected}.

\smallskip

The next result is Lemma~2.2 from~\cite{AKZ}.

\begin{lemma} \label{lemcon}
Let $\Of_{\sigma_1}$ and $\Of_{\sigma_2}$ be two $T$-orbits
corresponding to faces $\sigma_1,\sigma_2\preceq\sigma$.
Then the pair $(\Of_{\sigma_1}, \Of_{\sigma_2})$ is $H_e$-connected
if and only if
$$
e|_{\sigma_2} \le 0 \quad \text{and} \ \ \sigma_1=\sigma_2\cap e^{\perp} \ \
\text{is \! a \! facet \! of \! cone} \ \sigma_2.
$$
\end{lemma}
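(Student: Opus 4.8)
The plan is to combine the explicit form of the root subgroup $H_e$ with the description of $T$-orbits by their supports, and then read off both implications from a single computation of how the support of a point jumps under the $H_e$-flow.

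First I would fix the explicit model of $H_e$. On the coordinate ring $\KK[\sigma^\vee\cap M]$ the root $e$ determines the locally nilpotent derivation $\partial_e(\chi^m)=\langle v_{\tau_e},m\rangle\,\chi^{m+e}$ (well defined because the root inequalities guarantee $m+je\in\sigma^\vee$ for $0\le j\le\langle v_{\tau_e},m\rangle$), and $H_e=\{h_e(s)=\exp(s\partial_e)\}$. A direct summation gives $h_e(s)^*\chi^m=\sum_{j=0}^{p}\binom{p}{j}s^j\chi^{m+je}$ with $p=\langle v_{\tau_e},m\rangle$. I would view a point $x\in X$ as a monoid homomorphism $\sigma^\vee\cap M\to\KK$ and record that $x$ lies in $\Of_{\sigma_0}$ exactly when its support $\{m:\chi^m(x)\neq0\}$ equals the face $F_0:=\sigma_0^\perp\cap\sigma^\vee$ of $\sigma^\vee$. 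Here I use the order--reversing orbit--cone correspondence, under which $\Of_{\sigma_2}\subseteq\overline{\Of_{\sigma_1}}$ together with $\dim\Of_{\sigma_1}=\dim\Of_{\sigma_2}+1$ translates into ``$\sigma_1$ is a facet of $\sigma_2$''.

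The computational core is to determine, for $x\in\Of_{\sigma_2}$ and generic $s$, the support of $y_s:=h_e(s)\cdot x$. Since $\chi^m(y_s)$ is, up to sign, $\sum_{j=0}^{p}\binom{p}{j}s^j\chi^{m+je}(x)$ and no cancellation occurs for generic $s$, one has $m\in\mathrm{supp}(y_s)$ if and only if $m+je\in F_2=\sigma_2^\perp\cap\sigma^\vee$ for some $0\le j\le p$. Writing $m+je\in\sigma_2^\perp$ ray by ray and using the root condition (so $\langle v_\rho,e\rangle\ge0$ for $\rho\in\sigma_2(1)\setminus\{\tau_e\}$ and $\langle v_{\tau_e},e\rangle=-1$ when $\tau_e\in\sigma_2(1)$) turns this into sign constraints on $j$. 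This is where the implication splits: if $\tau_e\notin\sigma_2(1)$, or if $\tau_e\in\sigma_2(1)$ but some ray $\rho$ of $\sigma_2$ has $\langle v_\rho,e\rangle>0$, then the constraints force $j=0$, whence $\mathrm{supp}(y_s)=F_2$ and $H_e$ preserves $\Of_{\sigma_2}$; by Proposition~\ref{prcon} such a point cannot connect $\Of_{\sigma_2}$ to a different orbit. Thus $H_e$-connectedness forces $\tau_e\in\sigma_2(1)$ and $\langle v_\rho,e\rangle=0$ for every other ray of $\sigma_2$, i.e. $e|_{\sigma_2}\le0$. Conversely, under $e|_{\sigma_2}\le0$ the only surviving term is $j=p$, and a short check (again ray by ray, now using $\langle v_\rho,e\rangle\ge0$ for $\rho\notin\sigma_2(1)$) shows $m+pe\in\sigma^\vee$ for every $m\in\sigma_1^\perp\cap\sigma^\vee$ with $\sigma_1:=\sigma_2\cap e^\perp$; hence $\mathrm{supp}(y_s)=\sigma_1^\perp\cap\sigma^\vee$ for $s\neq0$.

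Reading the two directions off this computation finishes the proof. For sufficiency, if $e|_{\sigma_2}\le0$ and $\sigma_1=\sigma_2\cap e^\perp$, the orbit $H_e\cdot x_{\sigma_2}$ has support $F_2$ at $s=0$ and $\sigma_1^\perp\cap\sigma^\vee$ for $s\neq0$, so it meets both $\Of_{\sigma_2}$ and $\Of_{\sigma_1}$; Proposition~\ref{prcon} guarantees it meets no others, so the pair is $H_e$-connected. For necessity, $H_e$-connectedness gives $e|_{\sigma_2}\le0$ as above, and then $\mathrm{supp}(y_s)=\sigma_1^\perp\cap\sigma^\vee$ must coincide with the support $(\sigma_2\cap e^\perp)^\perp\cap\sigma^\vee$ computed from the condition; since faces of $\sigma^\vee$ are in bijection with faces of $\sigma$, this yields $\sigma_1=\sigma_2\cap e^\perp$, and the facet property is exactly the dimension statement of Proposition~\ref{prcon}. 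I expect the main obstacle to be the necessity direction: organizing the ray-by-ray sign analysis so as to rule out cleanly, and in all cases, every configuration in which $H_e$ fixes $\Of_{\sigma_2}$ rather than sliding it onto the larger orbit $\Of_{\sigma_1}$.
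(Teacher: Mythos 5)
Your proof is correct, and it is essentially the standard argument: the paper itself does not prove this lemma but imports it as Lemma~2.2 of \cite{AKZ}, where the proof proceeds exactly as you do, by computing $h_e(s)^*\chi^m=\sum_{j=0}^{p}\binom{p}{j}s^j\chi^{m+je}$ on points viewed as semigroup homomorphisms and reading off the jump of the support face ray by ray. The only points worth tightening are (i) in the necessity direction you should say explicitly that the connecting $H_e$-orbit, being a closed curve meeting $\Of_{\sigma_2}$, contains a non-fixed point of $\Of_{\sigma_2}$, so one may indeed take the base point $x$ there, and (ii) in the degenerate subcase $e|_{\sigma_2}=0$ the constraints do not literally force $j=0$ but still give $\mathrm{supp}(y_s)=F_2$, so the conclusion is unaffected.
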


Let $\AT(X)$ be the subgroup of $\Aut(X)$ generated by subgroups $T$
and $H_e$, $e\in\mathcal{R}$.
Clearly, two $T$-orbits $\Of$ and $\Of'$ on $X$ are contained in the same $\AT(X)$-orbit
if and only if there is a sequence $\Of=\Of_1, \Of_2,\ldots,\Of_k=\Of'$ such that
for any $i$ either the pair $(\Of_i,\Of_{i+1})$ or the pair $(\Of_{i+1},\Of_i)$
is $H_e$-connected for some $e\in\mathcal{R}$.

This statement admits a purely combinatorial reformulation. Let $\Gamma(\Of)$ be
a semigroup in $\Cl(X)$ generated by classes of the elements of $D(\Of)$.
The following result is given in Lemmas~2.2-4 of~\cite{Ba}.

\begin{proposition} \label{prat}
Two $T$-orbits $\Of$ and $\Of'$ on $X$ lie in the same $\AT(X)$-orbit
if and only if $\Gamma(\Of)=\Gamma(\Of')$.
\end{proposition}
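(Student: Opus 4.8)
The plan is to reduce everything to the combinatorics of faces and rays and then to prove the two implications separately. Recall that the $T$-orbit $\Of_{\sigma_0}$ corresponds to a face $\sigma_0\preceq\sigma$, that a prime invariant divisor $D_\tau$ contains $\Of_{\sigma_0}$ exactly when the ray $\tau$ lies in $\sigma_0$, and hence that $D(\Of_{\sigma_0})=\{D_\tau \ ; \ \tau\in\sigma(1),\ \tau\not\subseteq\sigma_0\}$ and $\Gamma(\Of_{\sigma_0})$ is the subsemigroup of $\Cl(X)$ generated by the classes $[D_\tau]$ of the rays outside $\sigma_0$. By the criterion stated before the Proposition together with Lemma~\ref{lemcon}, it suffices to show that a single $H_e$-connection $(\Of_{\sigma_1},\Of_{\sigma_2})$ preserves $\Gamma$, and conversely that equality of the semigroups allows one to pass between the two faces by such elementary moves. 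Throughout I will use that along an $H_e$-connection $\sigma_1=\sigma_2\cap e^{\perp}$ is a facet of $\sigma_2$ and, since $e$ is a Demazure root with $e|_{\sigma_2}\le0$, the ray $\tau_e$ is the unique ray of $\sigma_2$ not lying in $\sigma_1$.

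For the implication ``same $\AT(X)$-orbit $\Rightarrow$ equal semigroups'' I would use the relation coming from the character $e\in M$, namely $\sum_{\tau\in\sigma(1)}\langle v_\tau,e\rangle[D_\tau]=0$ in $\Cl(X)$. Since $\langle v_{\tau_e},e\rangle=-1$ and $\langle v_\tau,e\rangle\ge0$ for $\tau\ne\tau_e$, this rewrites as $[D_{\tau_e}]=\sum_{\tau\ne\tau_e}\langle v_\tau,e\rangle[D_\tau]$ with nonnegative coefficients. A ray $\tau\ne\tau_e$ with positive coefficient has $\langle v_\tau,e\rangle>0$; were it a ray of $\sigma_2$ it would, being different from $\tau_e$, lie in $\sigma_1=\sigma_2\cap e^{\perp}$ and satisfy $\langle v_\tau,e\rangle=0$, a contradiction. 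Hence every contributing ray lies outside $\sigma_2$, so $[D_{\tau_e}]$ already belongs to the semigroup generated by the rays outside $\sigma_2$. As the rays outside $\sigma_1$ are exactly those outside $\sigma_2$ together with $\tau_e$, this gives $\Gamma(\Of_{\sigma_1})=\Gamma(\Of_{\sigma_2})$, and the semigroup is constant along any chain of elementary moves.

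For the converse I would introduce a normal form. The decisive observation is that membership in the semigroup directly produces the Demazure root needed to discard a ray: if $\tau_0\subseteq\sigma_0$ and $[D_{\tau_0}]\in\Gamma(\Of_{\sigma_0})$, write $[D_{\tau_0}]=\sum_{\tau\not\subseteq\sigma_0}a_\tau[D_\tau]$ with $a_\tau\in\ZZ$, $a_\tau\ge0$. Using the presentation of $\Cl(X)$ as $\ZZ^{\sigma(1)}$ modulo the image of $M$, this equality lifts to an element $e\in M$ with $\langle v_{\tau_0},e\rangle=-1$, with $\langle v_\tau,e\rangle=a_\tau\ge0$ for $\tau\not\subseteq\sigma_0$, and with $\langle v_\tau,e\rangle=0$ for the remaining rays of $\sigma_0$. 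Such an $e$ is a Demazure root with $\tau_e=\tau_0$ and $e|_{\sigma_0}\le0$; moreover $\sigma_0\cap e^{\perp}$ is the cone generated by the rays of $\sigma_0$ other than $\tau_0$, and since restoring $\tau_0$ recovers $\sigma_0$ while $e^{\perp}$ is a proper supporting hyperplane, a dimension count shows $\sigma_0\cap e^{\perp}$ is a facet. By Lemma~\ref{lemcon} the pair $(\Of_{\sigma_0\cap e^{\perp}},\Of_{\sigma_0})$ is $H_e$-connected, so we may pass to the smaller face obtained by deleting $\tau_0$.

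Iterating this deletion strictly decreases the face and, by the first implication, keeps $\Gamma$ unchanged, so it terminates at a face $\sigma_*$ no interior ray of which has its class in $\Gamma(\Of_{\sigma_*})=\Gamma(\Of_{\sigma_0})$. Combining this with the fact that the rays outside $\sigma_*$ generate the semigroup, the set of rays of $\sigma_*$ is exactly $\{\tau\in\sigma(1) \ ; \ [D_\tau]\notin\Gamma(\Of_{\sigma_0})\}$, which depends only on the semigroup; as a face is determined by the rays it contains, $\sigma_*$ is uniquely determined by $\Gamma(\Of_{\sigma_0})$. Hence two orbits with equal semigroups reduce by elementary moves to one and the same orbit $\Of_{\sigma_*}$ and therefore lie in a common $\AT(X)$-orbit. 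The step I expect to require the most care is this converse, and within it the verification that the lifted character $e$ is genuinely a Demazure root and that $\sigma_0\cap e^{\perp}$ has codimension one; this is precisely where the definition of a Demazure root and the geometry of the faces of $\sigma$ enter.
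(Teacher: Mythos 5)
Your proof is correct. The paper itself gives no argument for Proposition~\ref{prat} --- it is imported from Lemmas~2.2--2.4 of \cite{Ba} --- so your write-up in effect supplies the missing proof, and it follows the route one would expect. The forward direction, using the relation $\sum_{\tau}\langle v_\tau,e\rangle[D_\tau]=0$ in $\Cl(X)$ together with the observation that every ray of $\sigma_2$ other than $\tau_e$ pairs to zero with $e$ and hence lies in $\sigma_1=\sigma_2\cap e^{\perp}$, is exactly right. For the converse, the points that genuinely require care are all present and correctly justified: the lift of the semigroup relation to a character $e\in M$ rests on exactness of $M\to\ZZ^{\sigma(1)}\to\Cl(X)\to 0$ at the middle term (the kernel being the $T$-invariant principal divisors $\div(\chi^e)$); the resulting $e$ is a Demazure root with $e|_{\sigma_0}\le 0$ because the remaining rays of $\sigma_0$ pair to $0$; the face $\sigma_0\cap e^{\perp}$ is a facet because it is a proper face and adjoining the single ray $\tau_0$ recovers $\sigma_0$; and the terminal face $\sigma_*$ is determined by $\Gamma(\Of_{\sigma_0})$ alone because $\sigma$ is strongly convex, so a face is recovered from the set of rays it contains. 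With the rays of $\sigma_*$ identified as $\{\tau\in\sigma(1)\ ;\ [D_\tau]\notin\Gamma(\Of_{\sigma_0})\}$, the two reduction chains concatenate into the required chain of $H_e$-connections, which is precisely the criterion for lying in one $\AT(X)$-orbit stated before the proposition.
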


\section{The Luna stratification}
\label{sec3}

In this section we recall basic facts on the Luna stratification
introduced in~\cite{Lu73}, see also \cite[Section~6]{PV}.
Let $G$ be a reductive affine algebraic group over
an algebraically closed field $\KK$ of characteristic zero and
$V$ be a rational finite-dimensional $G$-module. Denote
by $\KK[V]$ the algebra of polynomial functions on $V$ and
by $\KK[V]^G$ the subalgebra of $G$-invariants. Let $V/\!/G$
be the spectrum of the algebra $\KK[V]^G$. The inclusion
$\KK[V]^G\subseteq\KK[V]$ gives rise to a morphism $\pi \colon
V\to V/\!/G$ called the {\it quotient morphism} for the $G$-module $V$.
It is well known that the morphism $\pi$ is a categorical quotient
for the action of the group $G$ on $V$ in the category of
algebraic varieties, see~\cite[4.6]{PV}. In particular, $\pi$ is surjective.

The affine variety $X:=V/\!/G$ is irreducible and normal.
It is smooth if and only if the point $\pi(0)$ is smooth
on $X$. In the latter case the variety $X$ is an affine
space. Every fibre of the morphism $\pi$ contains a unique closed $G$-orbit.
For any closed $G$-invariant subset $A\subseteq V$ its image $\pi(A)$
is closed in $X$. These and other properties of the quotient morphism
may be found in~\cite[4.6]{PV}.

By Matsushima's criterion, if an orbit $G\cdot v$ is closed
in $V$, then the stabilizer $\St(v)$ is reductive,
see~\cite[4.7]{PV}. Moreover,
there exists a finite collection $\{H_1,\ldots,H_r\}$
of reductive subgroups in $G$ such that if an orbit $G\cdot v$
is closed in $V$, then $\St(v)$ is conjugate to one of these
subgroups. This implies that every fibre of
the morphism $\pi$ contains a point whose stabilizer coincides
with some $H_i$.

For every stabilizer $H$ of a point in a closed $G$-orbit in $V$ the subset
$$
V_H := \{ w\in V \, ; \, \text{there exists}\,
v\in V \, \text{such that} \
\overline{G\cdot w} \supset G\cdot v =\overline{G\cdot v} \
\text{and} \, \St(v)=H\}
$$
is $G$-invariant and locally closed in $V$.
The image $X_H:=\pi(V_H)$ turns out to be a smooth
locally closed subset of $X$. In particular, $X_H$ is
a smooth quasiaffine variety.

\begin{definition} \label{defls}
The stratification
$$
X \, = \, \bigsqcup_{i=1}^r \, X_{H_i}
$$
is called the {\it Luna stratification} of the quotient space $X$.
\end{definition}

Thus two points $x_1,x_2\in X$ are in the same Luna stratum if and only if
the stabilizers of points from closed $G$-orbits in $\pi^{-1}(x_1)$ and
$\pi^{-1}(x_2)$ are conjugate. In particular, if $G$ is a quasitorus,
these stabilizers should coincide.

There is a unique open dense stratum called the {\it principal
stratum} of $X$. The closure of any stratum is a union of strata.
Moreover, a stratum $X_{H_i}$  is contained in the closure of a stratum
$X_{H_j}$ if and only if the subgroup $H_i$ contains a subgroup conjugate to $H_j$.
This induces a partial ordering on the set of strata compatible with the (reverse)
ordering on the set of conjugacy classes of stabilizers.

%%%%%%%%%%%%%%%%%%%%%%%%%%%%%%%%%%%%%%%%%%%%%%%%%%%%%%%%%%%

\section{Cox rings and quotient presentations}
\label{sec4}

Let $X$ be a normal algebraic variety with finitely generated divisor class group $\Cl(X)$.
Assume that any regular invertible function $f\in \KK[X]^{\times}$ is constant.
Roughly speaking, the {\it Cox ring} of $X$ may be defined as
$$
R(X) \, := \, \bigoplus_{[D]\in \Cl(X)} \Gamma(X, D).
$$
In order to obtain a multiplicative structure on $R(X)$ some technical work is needed,
especially when the group $\Cl(X)$ has torsion. We refer to \cite[Section~4]{ADHL} for details.

It is well known that if $X$ is toric and non-degenerate, then $R(X)$ is a polynomial ring
$\KK[Y_1,\ldots,Y_m]$, where the variables $Y_i$ are indexed by $T$-invariant
prime divisors $D_i$ on $X$ and the $\Cl(X)$-grading on $R(X)$ is given by
$\deg(Y_i)=[D_i]$; see~\cite{Cox} and \cite[Chapter~5]{CLS}.

The affine space
$\overline{X}:=\Spec R(X)$ comes with a linear action of a quasitorus $H_X:=\Spec(\KK[\Cl(X)])$
given by the $\Cl(X)$-grading on $R(X)$. The algebra of $H_X$-invariants on $R(X)$
coincides with the zero weight component $R(X)_0=\Gamma(X,0)=\KK[X]$.

Assume that $X$ is a non-degenerate affine toric variety. Then we obtain a
quotient presentation
$$
\pi: \overline{X} \to \overline{X}/\!/H_X \cong X.
$$

\begin{definition}
Let $X$ be a non-degenerate affine toric variety and $V$ be a rational module of
a quasitorus $H$. The quotient morphism $\pi': V\to V/\!/ H$ is called a
{\it canonical quotient presentation} of $X$, if there are an isomorphism
$\varphi: H_X \to H$ and a linear isomorphism $\psi: \overline{X}\to V$
such that $\psi(h\cdot y)=\varphi(h)\cdot\psi(y)$ for any $h\in H_X$ and
$y\in\overline{X}$.
\end{definition}

A canonical quotient presentation may be characterized in terms of the quasitorus action.

\begin{definition}
An action of a reductive group $F$ on an affine variety $Z$ is said to be
{\it strongly stable} if there exists an open dense invariant subset $U\subseteq Z$ such that
\begin{enumerate}
\item
the complement $Z\setminus U$ is of codimension at least two in $Z$;
\item
the group $F$ acts freely on $U$ ;
\item
for every $z\in U$ the orbit $F\cdot z$ is closed in $Z$.
\end{enumerate}
\end{definition}

The following proposition may be found in \cite[Remark~6.4.2 and Theorem~6.4.3]{ADHL}.

\begin{proposition} \label{propstst}
\begin{enumerate}
\item
Let $X$ be a non-degenerate toric variety. Then the action of $H=Spec(\KK[\Cl(X)])$ on
$\overline{X}$ is strongly stable.
\item
Let $H$ be a quasitorus acting linearly on a vector space $V$. Then
the quotient space $X:=V/\!/T$ is a non-degenerate affine toric variety.
If the action of $H$ on $V$ is strongly stable, then the quotient morphism
$\pi: V \to V/\!/T$ is a canonical quotient presentation of~$X$. In particular,
the group $\Cl(X)$ is isomorphic to the character group of $H$.
\end{enumerate}
\end{proposition}

A canonical quotient presentation allows to define a canonical stratification on $X$.

\begin{definition}
The {\it Luna stratification} of a non-degenerate affine toric variety $X$ is the Luna
stratification of Definition~\ref{defls} induced on $X$ by the canonical
quotient presentation $\pi:\overline{X} \to X$.
\end{definition}

\begin{proposition}
Let $X$ be a non-degenerate affine toric variety. Then
the principal stratum of the Luna stratification on $X$
coincides with the smooth locus $X^{\reg}$.
\end{proposition}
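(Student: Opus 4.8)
The plan is to describe both the principal stratum and the smooth locus $X^{\reg}$ by the single condition that the local class group $\Cl(X,x)$ is trivial. On one side, the Luna stratum through $x$ is governed by the stabilizer of the closed $H_X$-orbit in $\pi^{-1}(x)$, and I will show this stabilizer has character group $\Cl(X,x)$; on the other side, the classical toric criterion says $x$ is smooth exactly when $\Cl(X,x)=0$. Matching the two gives the proposition.

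First I would identify the principal stratum with the stratum $X_{\{e\}}$ attached to the trivial stabilizer. By Proposition~\ref{propstst} the action of $H_X$ on $\overline X$ is strongly stable, so there is an open dense invariant $U\subseteq\overline X$ on which $H_X$ acts freely and with closed orbits. Every $w\in U$ then satisfies $\St(w)=\{e\}$ and $\overline{H_X\cdot w}=H_X\cdot w$, so $U$ lies in the locus $V_{\{e\}}$ of Section~\ref{sec3} (taken with $V=\overline X$); hence the stratum $X_{\{e\}}=\pi(V_{\{e\}})$ is dense. Being a stratum it is locally closed, and a dense locally closed subset of the irreducible variety $X$ is open; therefore $X_{\{e\}}$ is the unique open dense, i.e. principal, stratum. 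Recall also from Section~\ref{sec3} that for a quasitorus the strata are indexed by the \emph{actual} stabilizer subgroups of points in closed orbits, so $x$ lies in the principal stratum precisely when the closed $H_X$-orbit in $\pi^{-1}(x)$ has trivial stabilizer.

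The heart of the argument is then the stabilizer computation. Writing $\overline X=\Spec\KK[Y_1,\dots,Y_m]=\KK^m$ with the diagonal action $h\cdot(y_i)=([D_i](h)\,y_i)$ as in Section~\ref{sec4}, I would fix $x\in\Of_{\sigma_0}$ and show that the closed orbit in $\pi^{-1}(x)$ is the $H_X$-orbit of a point $y$ with $y_i=0$ exactly for the rays $\tau_i\preceq\sigma_0$. That such a $y$ maps into $\Of_{\sigma_0}$ is the orbit--cone correspondence for the toric morphism $\overline X\to X$; that its orbit is closed I would verify by the one--parameter subgroup criterion, using a functional $u\in M$ with $\sigma_0=\sigma\cap u^{\perp}$ to rule out any integral relation $\sum a_i v_i=0$ with $a_i\ge 0$ and not all zero on the rays $\tau_i\not\preceq\sigma_0$. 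For this $y$ one has $\St(y)=\{h\,;\,[D_i](h)=1\text{ for all }\tau_i\not\preceq\sigma_0\}$, whose character group is $\Cl(X)/G(\Of_{\sigma_0})$. By Proposition~\ref{pr1}, $G(\Of_{\sigma_0})=G(T\cdot x)=\Cl_x(X)$, so this character group is exactly $\Cl(X,x)$, and the stabilizer is trivial if and only if $\Cl(X,x)=0$.

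It then remains to invoke the toric characterization of smoothness: for $x\in\Of_{\sigma_0}$ the local ring $\Of_{X,x}$ is factorial if and only if the cone $\sigma_0$ is smooth, equivalently $X$ is smooth at $x$, and factoriality is precisely the vanishing of $\Cl(X,x)$; see~\cite{CLS} and~\cite{Fu}. Combining the two characterizations yields $X_{\{e\}}=\{x\,;\,\Cl(X,x)=0\}=X^{\reg}$. The step I expect to be the main obstacle is the identification of the \emph{closed} orbit in each fibre $\pi^{-1}(x)$ together with its stabilizer: the value $\Cl(X,x)$ is forced by Proposition~\ref{pr1}, but pinning down which $H_X$-orbit in the fibre is closed requires the supporting-hyperplane argument above, and this is where the geometry of the face $\sigma_0$ genuinely enters. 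One inclusion is in fact soft, since every Luna stratum is smooth as a variety and the principal stratum is open, whence it is automatically contained in $X^{\reg}$; the content of the proposition is the reverse inclusion, which is what the stabilizer computation supplies.
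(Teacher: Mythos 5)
Your proof is correct, but it takes a genuinely different and more self-contained route than the paper. The paper disposes of the statement in two sentences: one inclusion is the ``soft'' one you also note (the principal stratum is open and smooth as a locally closed subset, hence lies in $X^{\reg}$), and for the converse it simply cites \cite[Proposition~6.1.6]{ADHL}, which says that the fibre $\pi^{-1}(x)$ over a smooth point is a single free (hence closed) $H_X$-orbit, so that $x$ lands in the stratum of the trivial stabilizer. You instead re-derive the underlying structural fact: you locate the closed orbit in an arbitrary fibre $\pi^{-1}(x)$, $x\in\Of_{\sigma_0}$, as the orbit of a point vanishing exactly on the coordinates indexed by rays of $\sigma_0$, check closedness via the supporting functional $u\in M$ with $\sigma_0=\sigma\cap u^{\perp}$ (which correctly kills any nonnegative relation among the $v_i$ with $\tau_i\not\preceq\sigma_0$, regardless of the signs of the coefficients on the rays of $\sigma_0$), and identify the character group of its stabilizer with $\Cl(X)/G(\Of_{\sigma_0})=\Cl(X,x)$ via Proposition~\ref{pr1}; combined with the toric criterion ``$x$ smooth $\Leftrightarrow\Cl(X,x)=0$'' this gives the result. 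In effect you have proved the stronger statement that the Luna strata are the level sets of $x\mapsto\Cl_x(X)$, i.e.\ essentially \cite[Proposition~6.2.2]{ADHL} and the paper's Proposition~\ref{prop23}, and then specialized to the principal stratum. What your approach buys is independence from the two cited ADHL propositions and an explicit picture of the closed orbits in the fibres; what it costs is length, and a small amount of care in stating the one-parameter-subgroup/relation criterion precisely (your phrasing of which relations must be excluded is slightly ambiguous, though your functional $u$ handles the general case anyway). Your identification of the principal stratum with the trivial-stabilizer stratum via strong stability is also fine and is implicit in the paper's appeal to Section~\ref{sec3}.
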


\begin{proof}
As was pointed out above, points of the principal stratum are smooth on $X$.
Conversely, the fibre $\pi^{-1}(x)$ over a smooth point $x\in X$ consists
of one $H_X$-orbit and $H_X$ acts on $\pi^{-1}(x)$ freely; see~\cite[Proposition~6.1.6]{ADHL}.
This shows that $x$ is contained in the principal stratum.
\end{proof}

Now we assume that $X$ is a degenerate affine toric variety. Let us fix a point $x_0$
in the open $T$-orbit on $X$ and consider a closed subvariety $X_0=\{x\in X ; f(x)=f(x_0)\}$,
where $f$ runs through all invertible regular functions on $X$. Then $X_0$ is a non-degenerate
affine toric variety with respect to a subtorus $T_1\subseteq T$, and $X_0$ depends on the
choice of $x_0$ only up to shift by an element of $T$. Moreover, $X\cong X_0 \times T_2$
for a subtorus $T_2\subset T$ with $T=T_1\times T_2$. We define a {\it Luna stratum}
on $X$ as $T\cdot Y$, where $Y$ is a Luna stratum on $X_0$.  This way we obtain
a canonical stratification of $X$ with open stratum being the smooth locus.

\smallskip

The following lemma is straightforward.

\begin{lemma} \label{lemred}
In notations as above every Luna stratum on $X$ is isomorphic to
$Y\times T_2$, where $Y$ is a Luna stratum on $X_0$.
\end{lemma}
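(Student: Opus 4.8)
The plan is to read off the isomorphism directly from the product decomposition $X \cong X_0 \times T_2$ supplied above, under which the splitting $T = T_1 \times T_2$ acts by the product action: $T_1$ acts on the first factor $X_0$ as its defining torus, while $T_2$ acts on the second factor by translations. Identifying $X_0$ with $X_0 \times \{e\}$ for the identity $e \in T_2$, a Luna stratum $Y \subseteq X_0$ becomes the subset $Y \times \{e\}$ of $X$, and by definition the Luna stratum of $X$ attached to it is the orbit $T \cdot Y$. Thus everything reduces to computing this orbit, and the single fact I will need is that $Y$ is invariant under $T_1$.

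First I would verify the $T_1$-invariance of $Y$. Writing $V := \overline{X_0} \cong \KK^m$ for the Cox space of $X_0$, the torus $T_1$ is the quotient of the big torus $(\KK^{\times})^m$ of $V$ by the quasitorus $H_{X_0}$, and the $T_1$-action on $X_0 = V /\!/ H_{X_0}$ is induced by the linear $(\KK^{\times})^m$-action on $V$. Since $(\KK^{\times})^m$ is abelian and $H_{X_0}$ is a subgroup of it, for every $g \in (\KK^{\times})^m$ and $w \in V$ one has $\St(g \cdot w) = g\,\St(w)\,g^{-1} = \St(w)$; moreover $g$ commutes with the $H_{X_0}$-action, so it carries closed orbits to closed orbits and $\overline{H_{X_0} \cdot w}$ to $\overline{H_{X_0} \cdot (g w)}$. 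Hence the $(\KK^{\times})^m$-action preserves each locus $V_H$ of Definition~\ref{defls}, and passing to the quotient shows that $T_1$ preserves every stratum $Y = \pi(V_H)$, that is, $T_1 \cdot Y = Y$.

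With this in hand the orbit computation is immediate. Using the product action,
$$
T \cdot Y \;=\; (T_1 \times T_2) \cdot (Y \times \{e\}) \;=\; (T_1 \cdot Y) \times (T_2 \cdot e) \;=\; Y \times T_2,
$$
where the last equality uses $T_1 \cdot Y = Y$ together with the transitivity of $T_2$ acting on itself. The product structure on $X$ then identifies the stratum $T \cdot Y$ with $Y \times T_2$, which is the asserted isomorphism.

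The only substantive point is the $T_1$-invariance of $Y$; the rest is a formal consequence of the product decomposition. I note that this invariance could alternatively be deduced from the inclusion $T_1 \subseteq \Aut(X_0)^0$ combined with the compatibility of $\Aut(X_0)^0$-orbits with the Luna stratification, but the abelian-stabilizer argument above is self-contained and avoids appealing to the main theorem, which is established only later.
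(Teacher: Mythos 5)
Your proof is correct. The paper itself gives no argument (the lemma is labelled ``straightforward''), and your write-up supplies exactly the intended details: the only non-formal point is the $T_1$-invariance of the Luna strata of $X_0$, which you establish correctly by noting that the big torus of the Cox space $\overline{X_0}$ commutes with $H_{X_0}$, hence preserves stabilizers and closed orbits and therefore each locus $V_H$, so that the induced $T_1$-action preserves each stratum $\pi(V_H)$; the identity $T\cdot Y=(T_1\cdot Y)\times T_2=Y\times T_2$ then follows formally from the product decomposition.
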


Now we present the first characterization of the Luna stratification.

\begin{proposition} \label{prop23}
Let $X$ be an affine toric variety. Then two points $x,x'\in X$
are in the same Luna stratum if and only if $\Cl_x(X)=\Cl_{x'}(X)$.
\end{proposition}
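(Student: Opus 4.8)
The plan is to compute, for each point $x$, the stabilizer of a point in the closed $H_X$-orbit lying over $x$ in the canonical quotient presentation, and to match it with $\Cl_x(X)$ via the duality between the quasitorus $H_X$ and its character group $\Cl(X)$. The first reduction is to the non-degenerate case. If $X$ is degenerate, I write $X\cong X_0\times T_2$ as in the paragraph preceding Lemma~\ref{lemred}, so that $\Cl(X)=\Cl(X_0)$ and, for $x=(x_1,t_2)$, the prime $T$-invariant divisors are the $D_i\times T_2$, with $(D_i\times T_2)\ni x$ exactly when $D_i\ni x_1$. Proposition~\ref{pr1} then yields $\Cl_x(X)=G(T\cdot x)=G(T_1\cdot x_1)=\Cl_{x_1}(X_0)$, while by Lemma~\ref{lemred} the Luna strata of $X$ are the sets $Y\times T_2$ with $Y$ a stratum of $X_0$. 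Hence two points of $X$ share a stratum iff their $X_0$-components do, and the degenerate case follows from the non-degenerate one, to which I restrict henceforth.

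Next I set up the quotient presentation from Section~\ref{sec4}: $\overline{X}=\Spec R(X)=\AA^m$ with coordinates $Y_1,\dots,Y_m$ indexed by the prime $T$-invariant divisors $D_1,\dots,D_m$, and $H_X$ acts diagonally with $Y_i$ of weight $[D_i]\in\Cl(X)$, the weights being subject to the relations of the exact sequence $0\to M\to\ZZ^m\to\Cl(X)\to 0$, $e_i\mapsto[D_i]$. For $\bar y\in\overline{X}$ with support $I(\bar y)=\{i:Y_i(\bar y)\ne 0\}$ the stabilizer is read off at once:
\[
\St(\bar y)=\bigcap_{i\in I(\bar y)}\Ker([D_i])=\{h\in H_X:\chi(h)=1\ \text{for all}\ \chi\in\langle[D_i]:i\in I(\bar y)\rangle\}.
\]
By the characterisation of strata recalled in Section~\ref{sec3}, and since $H_X$ is a quasitorus, $x$ and $x'$ lie in the same stratum iff the stabilizers of points of the closed orbits in $\pi^{-1}(x)$ and $\pi^{-1}(x')$ coincide. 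Everything therefore reduces to identifying the support of the closed orbit in a fibre.

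The heart of the argument is the claim that, for $x\in\Of_{\sigma_0}$, the closed $H_X$-orbit in $\pi^{-1}(x)$ has support $I_0:=\{i:\tau_i\not\preceq\sigma_0\}=\{i:D_i\not\ni x\}$. To prove it I would use the Hilbert--Mumford criterion for torus actions: an orbit $H_X\cdot\bar y$ is closed in $\AA^m$ iff $0$ lies in the relative interior of the convex hull of $\{[D_i]:i\in I(\bar y)\}$ in $\Cl(X)_\QQ$, cf.~\cite{PV}. Through the exact sequence this means finding $u\in M_\QQ$ with $\langle v_{\tau_i},u\rangle=0$ for $\tau_i\preceq\sigma_0$ and $\langle v_{\tau_i},u\rangle>0$ otherwise; such a $u$ is exactly a supporting functional $u_0\in\sigma^\vee\cap M$ of the face, i.e. one with $\sigma_0=\sigma\cap u_0^\perp$. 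This shows that the orbit with support $I_0$ is closed. Moreover the big torus $\widehat T:=(\KK^\times)^m$ of $\overline{X}$ has a single orbit with support $I_0$, and $\pi$ maps it onto $\Of_{\sigma_0}$ (its image is a $T$-orbit inside $\Of_{\sigma_0}$, hence equals it); thus this closed orbit meets $\pi^{-1}(x)$, and by uniqueness of the closed orbit in a fibre it is the desired one. I expect this geometric step to be the main obstacle, since one must simultaneously verify closedness via the supporting functional and check that the closed orbit actually lies over the given point $x$ rather than merely over its $T$-orbit.

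Granting the claim, Proposition~\ref{pr1} identifies $\langle[D_i]:i\in I_0\rangle=G(T\cdot x)=\Cl_x(X)$, so a point $\bar y_0$ of the closed orbit over $x$ has $\St(\bar y_0)=\{h\in H_X:\chi(h)=1\ \text{for all}\ \chi\in\Cl_x(X)\}=:\Cl_x(X)^\perp$. It remains to invoke the duality for $H_X=\Spec\KK[\Cl(X)]$: the assignment $L\mapsto L^\perp$ on subgroups $L\subseteq\Cl(X)$ is injective, because the characters of $H_X$ trivial on $L^\perp$ are precisely those of $L$. Consequently $\St(\bar y_0)=\St(\bar y_0')$ holds iff $\Cl_x(X)=\Cl_{x'}(X)$, which combined with the stratum criterion of the second paragraph gives exactly the stated equivalence.
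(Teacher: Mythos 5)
Your argument is correct, and at top level it follows the same strategy as the paper: reduce to the non-degenerate case, pass to the canonical quotient presentation, and match stabilizers of closed $H_X$-orbits with subgroups of $\Cl(X)$. The difference lies in how the key identification is obtained. The paper disposes of it in one line by citing \cite[Proposition~6.2.2]{ADHL}, which says that for $v$ on a closed $H_X$-orbit over $x$ the restriction map $\XX(H_X)\to\XX(\St(v))$ is exactly the projection $\Cl(X)\to\Cl(X,x)$; the equivalence of ``same stratum'' with $\Cl_x(X)=\Cl_{x'}(X)$ is then immediate. You instead re-prove this identification in the toric case: you determine the support of the closed orbit in $\pi^{-1}(x)$ to be $\{i:\, x\notin D_i\}$, verifying closedness by exhibiting a supporting functional of the face $\sigma_0$ and invoking the standard closedness criterion for quasitorus orbits in a module, then compute $\St(\bar y_0)=\Cl_x(X)^{\perp}$ via Proposition~\ref{pr1} and finish by duality for diagonalizable groups. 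This costs more space but is self-contained, and it is essentially the same weight computation the paper itself carries out later inside the proof of Proposition~\ref{pr2}, where the weight set $\Delta(v)$ of a point on a closed orbit over $x$ is identified with the classes of the divisors in $-D(T\cdot x)$. The one step you gloss over --- that the $\widehat{T}$-orbit with support $I_0$ maps onto $\Of_{\sigma_0}$ rather than onto some other $T$-orbit --- is the standard orbit correspondence for the Cox construction \cite[Chapter~5]{CLS}, read off from the map of fans $e_i\mapsto v_{\tau_i}$; with that justification your proof is complete.
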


\begin{proof}
By Lemma~\ref{lemred} we may assume that $X$ is non-degenerate.
Let $\pi:\overline{X} \to X$ be the canonical quotient presentation.
For any point $v\in\overline{X}$ such that the orbit $H_X\cdot v$ is closed in $\overline{X}$
the stabilizer $\St(v)$ in $H_X$ is defined by the surjection of the character groups
$\XX(H_X) \to \XX(\St(v))$. By~\cite[Proposition~6.2.2]{ADHL} we may identify
$\XX(H_X)$ with $\Cl(X)$, $\XX(\St(v))$ with $\Cl(X,x)$, and the homomorphism
with the projection $\Cl(X)\to\Cl(X,x)$, where $x=\pi(v)$.
Thus two points $v,v'\in \overline{X}$
with closed $H_X$-orbits have the same stabilizers in $H_X$, or, equivalently,
the points $x=\pi(v)$ and $x'=\pi(v')$ lie in the same Luna stratum on $X$ if and only if
$\Cl_x(X)=\Cl_{x'}(X)$.
\end{proof}

%%%%%%%%%%%%%%%%%%%%%%%%%%%%%%%%%%%%%%%%%%%%%%%%%%%%%%%%%%%

\section{Orbits of the automorphism group}
\label{sec5}

The following theorem describes orbits of the group
$\Aut(X)^0$ in terms of local divisor class groups and the Luna stratification
of an affine toric variety $X$.

\begin{theorem} \label{tmain}
Let $X$ be an affine toric variety and $x,x'\in X$. Then
the following conditions are equivalent.
\begin{enumerate}
\item
The $\Aut(X)^0$-orbits of the points $x$ and $x'$ coincide.
\smallskip
\item
$G(T\cdot x) =G(T\cdot x')$.
\smallskip
\item
$\Cl_x(X)=\Cl_{x'}(X)$.
\smallskip
\item
The points $x$ and $x'$ lie in the same Luna stratum on $X$.
\end{enumerate}
\end{theorem}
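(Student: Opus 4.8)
The plan is to prove the equivalence by establishing a cycle of implications, leaning heavily on the auxiliary results already assembled. The equivalences $(2)\Leftrightarrow(3)\Leftrightarrow(4)$ are essentially free: Proposition~\ref{pr1} identifies $\Cl_x(X)$ with $G(T\cdot x)$ for any toric variety, which gives $(2)\Leftrightarrow(3)$ immediately, while Proposition~\ref{prop23} gives $(3)\Leftrightarrow(4)$. So the entire content of the theorem is the single nontrivial equivalence $(1)\Leftrightarrow(3)$ relating $\Aut(X)^0$-orbits to local class groups. I would therefore organize the proof as: cite the two propositions for the trivial part, then prove $(1)\Rightarrow(3)$ and $(3)\Rightarrow(1)$ separately.

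For the direction $(1)\Rightarrow(3)$, I would argue that if $x' = \phi(x)$ for some $\phi\in\Aut(X)^0$, then $\Cl_x(X) = \Cl_{x'}(X)$. The Lemma stating that $\Aut(X)^0$ lies in the kernel of the action of $\Aut(X)$ on $\Cl(X)$ does the essential work here: since $\phi$ acts trivially on $\Cl(X)$, Lemma~\ref{lemlcg} yields exactly $\Cl_x(X) = \Cl_{\phi(x)}(X)$. This direction is clean and needs only to be spelled out.

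The hard direction, and the main obstacle, is $(3)\Rightarrow(1)$: assuming $\Cl_x(X) = \Cl_{x'}(X)$, I must produce an automorphism in $\Aut(X)^0$ carrying $x$ to $x'$. By the reduction in Lemma~\ref{lemred} I would first reduce to the non-degenerate case (the product with $T_2$ contributes only translations by the torus, which lie in $\Aut(X)^0$). The strategy is to move both points into a common torus orbit and then use the torus to match them. Here the combinatorial translation is key: since $\Cl_x(X)=G(T\cdot x)$ records the \emph{group} generated by the relevant divisor classes, whereas Proposition~\ref{prat} characterizes $\AT(X)$-orbits via the \emph{semigroup} $\Gamma(T\cdot x)$, the two invariants differ precisely by passing from a semigroup to the group it generates. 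The plan is to show that the root subgroups $H_e$, together with $T$, generate enough automorphisms to connect any two $T$-orbits whose divisor-class \emph{groups} agree, even when the semigroups do not. Concretely, one uses Lemma~\ref{lemcon} to produce $H_e$-connections that can move a point both ``up'' and ``down'' between adjacent torus orbits, so that the reachability relation on $T$-orbits is governed by the group rather than the semigroup; combined with Proposition~\ref{prat} one then deduces that $x$ and $x'$ lie in the same $\AT(X)$-orbit whenever $G(T\cdot x)=G(T\cdot x')$.

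Finally I would observe that $\AT(X)\subseteq\Aut(X)^0$, since $T$ is a connected linear algebraic group and each root subgroup $H_e\cong(\KK,+)$ is connected, so both are included in algebraic families over rational curves by the Lemma on connected linear algebraic groups. This places the constructed automorphism in $\Aut(X)^0$ and completes $(3)\Rightarrow(1)$. The delicate point to get right is the combinatorial lemma that semigroup-equality along $H_e$-connections can be upgraded to group-equality once one is allowed to traverse connections in both directions; I expect this to require a careful induction on the dimension or codimension of the torus orbits, using that each $\mathcal{R}_\tau$ is infinite so that every facet relation admits a suitable root.
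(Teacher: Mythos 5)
Your treatment of the easy implications matches the paper: $(2)\Leftrightarrow(3)$ via Proposition~\ref{pr1}, $(3)\Leftrightarrow(4)$ via Proposition~\ref{prop23}, and $(1)\Rightarrow(3)$ via the lemma that $\Aut(X)^0$ acts trivially on $\Cl(X)$ together with Lemma~\ref{lemlcg}; the observation that $\AT(X)\subseteq\Aut(X)^0$ is also correct and needed. The problem is your plan for the hard direction. You propose to show that ``the reachability relation on $T$-orbits is governed by the group rather than the semigroup'' by traversing $H_e$-connections in both directions, and then to combine this with Proposition~\ref{prat}. But Proposition~\ref{prat} is an \emph{if and only if}: two $T$-orbits lie in the same $\AT(X)$-orbit precisely when the semigroups $\Gamma(\Of)$ coincide, and $\AT(X)$ already contains every $H_e$ together with its inverses, so bidirectional traversal is already built into that statement. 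If the semigroups $\Gamma(T\cdot x)$ and $\Gamma(T\cdot x')$ could genuinely differ while the groups $G(T\cdot x)$ and $G(T\cdot x')$ agree, Proposition~\ref{prat} would force the points into \emph{different} $\AT(X)$-orbits, and no combinatorial induction on orbit dimension could repair this. Your proposed ``upgrade'' lemma is therefore not the right statement, and as formulated it cannot be proved.

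What you are missing is the paper's key step (its Proposition~\ref{pr2}): for an affine toric variety the semigroup $\Gamma(T\cdot x)$ is already a group, i.e.\ $\Gamma(T\cdot x)=G(T\cdot x)$, so the apparent gap between the semigroup invariant of Proposition~\ref{prat} and the group invariant in condition~(2) simply does not exist. The proof of this uses the canonical quotient presentation $\pi\colon\overline{X}\to X$: pick $v\in\pi^{-1}(x)$ with $H_X\cdot v$ closed; the $H_X$-weights occurring in $v$ are exactly the classes $-[D_{i_1}],\ldots,-[D_{i_k}]$ of the invariant divisors not containing $x$, and closedness of the orbit forces the cone these weights generate in $\Cl(X)\otimes_\ZZ\QQ$ to be a linear subspace, whence the semigroup they generate in $\Cl(X)$ is a group. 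With that in hand, $G(T\cdot x)=G(T\cdot x')$ gives $\Gamma(T\cdot x)=\Gamma(T\cdot x')$, Proposition~\ref{prat} puts $x$ and $x'$ in one $\AT(X)$-orbit, and $\AT(X)\subseteq\Aut(X)^0$ finishes the argument. Without this (or an equivalent combinatorial substitute proving $\Gamma=G$ in the affine case), your proof of $(3)\Rightarrow(1)$ does not close.
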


\begin{proof}
Implication $1 \Rightarrow 3$ follows from Lemma~\ref{lemlcg}.
Conditions $3$ and $4$ are equivalent by Proposition~\ref{prop23}
and conditions $2$ and $3$ are equivalent by Proposition~\ref{pr1}.
So it remains to prove implication $2 \Rightarrow 1$.

\begin{proposition} \label{pr2}
Let $X$ be an affine toric variety and $x\in X$. Then $G(T\cdot x) = \Gamma(T\cdot x)$.
\end{proposition}

\begin{proof}
We begin with some generalities on quasitorus representations.
Let $K$ be a finitely generated abelian group.
Consider a diagonal linear action of the quasitorus $H=\Spec(\KK[K])$ on a vector space
$V$ of dimension $m$ given by characters $\chi_1,\ldots,\chi_m\in K$. Then we have a weight
decomposition $V=\oplus_{i=1}^m \KK e_i$, where $h\cdot e_i=\chi_i(h)e_i$ for any $h\in H$.
With any vector $v=x_1e_1+\ldots+x_me_m$ one associates the set of characters
$\Delta(v)=\{\chi_{i_1},\ldots,\chi_{i_k}\}$ such that $x_{i_1}\ne 0,\ldots,x_{i_k}\ne 0$.
It is well known that the orbit $H\cdot v$ is closed in $V$ if and only if
the cone generated by $\chi_{i_1}\otimes 1,\ldots,\chi_{i_k}\otimes 1$
in $K_\QQ=K\otimes_\ZZ\QQ$ is a subspace.

\smallskip

Below we make use of the following elementary lemma.

\begin{lemma} \label{Lemel}
Let $\chi_1,\dots,\chi_m$ be elements of a finitely generated abelian group $K$.
If the cone generated by $\chi_1\otimes 1,\dots,\chi_m\otimes 1$ in
$K_\QQ$ is a subspace, then the semigroup generated
by $\chi_1,\dots,\chi_m$ in $K$ is a group.
\end{lemma}

Let us return to the canonical quotient presentation $\pi:\overline{X}\to X$.
By construction, the $H_X$-weights of the linear $H_X$-action on $\overline{X}$
are the classes $-[D_1],\ldots,-[D_m]$ in $\Cl(X)$. Moreover,
for any point $v\in\overline{X}$ such that $H_X\cdot v$ is closed in $\overline{X}$
the set of weights $\Delta(v)$ coincides with the classes
of the divisors from $-D(T\cdot x)=\{-D_{i_1},\ldots,-D_{i_k}\}$, where $x=\pi(v)$.
Since the orbit $H_X\cdot v$ is closed in $\overline{X}$, by Lemma~\ref{Lemel}
the semigroup $\Gamma(T\cdot x)$ generated by $[D_{i_1}],\ldots,[D_{i_k}]$ coincides
with the group $G(T\cdot x)$ generated by $[D_{i_1}],\ldots,[D_{i_k}]$, and we
obtain Proposition~\ref{pr2}.
\end{proof}

By Proposition~\ref{pr2}, we have $\Gamma(T\cdot x)=\Gamma(T\cdot x')$.
Further, Proposition~\ref{prat} implies that $x$ and $x'$ lie in the same
$\AT(X)$-orbit and thus in the same $\Aut(X)^0$-orbit. This completes
the proof of Theorem~\ref{tmain}.
\end{proof}

\begin{remark}
Condition 2 of Theorem~\ref{tmain} is the most effective in practice.
It is interesting to know for which wider classes of varieties equivalence
of conditions 1 and 3 holds.
\end{remark}

\begin{remark}
It follows from properties of the Luna stratification that the $\Aut(X)^0$-orbit
of a point $x$ is contained in the closure of the $\Aut(X)^0$-orbit of a point $x'$
if and only if $\Cl_x(X)$ is a subgroup of $\Cl_{x'}(X)$.
\end{remark}

Let us finish this section with a description of $\Aut(X)$-orbits on an affine
toric variety $X$. Denote by $S(X)$ the image of the group $\Aut(X)$ in the
automorphism group $\Aut(\Cl(X))$ of the abelian group $\Cl(X)$. The group
$S(X)$ preserves the semigroup generated by the classes $[D_1],\ldots,[D_m]$
of prime $T$-invariant divisors. Indeed, this is the semigroup of
classes containing an effective divisor.
In particular, the group $S(X)$ preserves the cone in $\Cl(X)\otimes_{\ZZ}\QQ$
generated by $[D_1]\otimes 1,\ldots,[D_m]\otimes 1$. This shows that
$S(X)$ is finite.

\smallskip

The following proposition is a direct corollary of Theorem~\ref{tmain}.

\begin{proposition}
Let $X$ be an affine toric variety. Two points $x,x'\in X$
are in the same $\Aut(X)$-orbit if and only if there exists
$s\in S(X)$ such that $s(\Cl_x(X))=\Cl_{x'}(X)$.
\end{proposition}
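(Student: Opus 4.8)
The plan is to promote the purely group-theoretic condition on local class groups to a genuine automorphism of $X$ and then invoke Theorem~\ref{tmain} to close the gap via the connected component. The starting point is a refinement of Lemma~\ref{lemlcg} describing how the $\Aut(X)$-action on $\Cl(X)$ interacts with the local class groups.

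First I would prove the following compatibility statement: if $\phi\in\Aut(X)$ induces the element $s\in S(X)$ on $\Cl(X)$, then $s(\Cl_x(X))=\Cl_{\phi(x)}(X)$ for every $x\in X$. This reduces to Lemma~\ref{lemlcg} when $s=\id$, and the argument is the same in spirit. Recall from Section~\ref{sec1} that $\Cl_x(X)$ consists precisely of those classes admitting a representative whose support avoids $x$. The automorphism $\phi$ permutes prime divisors and sends a divisor $D$ to $\phi_*D$ with $\mathrm{supp}(\phi_*D)=\phi(\mathrm{supp}\,D)$; consequently $x\notin\mathrm{supp}\,D$ if and only if $\phi(x)\notin\mathrm{supp}(\phi_*D)$. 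Hence a representative of a class $[D]\in\Cl_x(X)$ avoiding $x$ is carried to a representative of $s([D])$ avoiding $\phi(x)$, giving $s(\Cl_x(X))\subseteq\Cl_{\phi(x)}(X)$; applying the same reasoning to $\phi^{-1}$ yields equality.

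With this lemma the proposition follows quickly. For the forward implication, if $x'=\phi(x)$ for some $\phi\in\Aut(X)$, take $s\in S(X)$ to be the image of $\phi$; the compatibility statement immediately gives $s(\Cl_x(X))=\Cl_{x'}(X)$. For the converse, suppose $s(\Cl_x(X))=\Cl_{x'}(X)$ for some $s\in S(X)$. Choose any $\phi\in\Aut(X)$ mapping to $s$, which exists since $S(X)$ is by definition the image of $\Aut(X)$ in $\Aut(\Cl(X))$, and set $x'':=\phi(x)$. By the compatibility statement $\Cl_{x''}(X)=s(\Cl_x(X))=\Cl_{x'}(X)$, so by the equivalence of conditions $1$ and $3$ in Theorem~\ref{tmain} the points $x''$ and $x'$ lie in one $\Aut(X)^0$-orbit. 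Composing the resulting element of $\Aut(X)^0$ with $\phi$ places $x$ and $x'$ in a single $\Aut(X)$-orbit.

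The only genuine point requiring care is the compatibility lemma, and within it the identification $\mathrm{supp}(\phi_*D)=\phi(\mathrm{supp}\,D)$ together with the description of $\Cl_x(X)$ by supports; everything else is bookkeeping built on the already-established Theorem~\ref{tmain}. I expect no serious obstacle here, since both the action of $\Aut(X)$ on $\Cl(X)$ and the support characterization of $\Cl_x(X)$ are recalled in Section~\ref{sec1}.
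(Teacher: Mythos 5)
Your argument is correct and is exactly the intended derivation: the paper offers no written proof, calling the proposition a direct corollary of Theorem~\ref{tmain}, and your compatibility lemma $s(\Cl_x(X))=\Cl_{\phi(x)}(X)$ (the equivariant refinement of Lemma~\ref{lemlcg}) is precisely the routine step being left implicit. Both directions of your reduction to the equivalence of conditions 1 and 3 in Theorem~\ref{tmain} are sound.
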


If $X$ is an affine toric variety, then the group $\Aut(X)^0$ acts
on the smooth locus $X^{\reg}$ transitively; see~\cite[Theorem~2.1]{AKZ}.
Let $X^{\sing}=X_1\cup\ldots\cup X_r$ be the decomposition of the singular
locus into irreducible components. One may expect that the group $\Aut(X)^0$
acts transitively on every subset $X^{\reg}_i\setminus \cup_{j\ne i} X_j$ too.
The following example shows that this is not the case.

\begin{example}
Consider a two-dimensional torus $T^2$ acting linearly on the vector space $\KK^7$:
$$
(t_1,t_2)\cdot(z_1,z_2,z_3,z_4,z_5,z_6,z_7)=
(t_1z_1,t_1z_2,t_1^{-1}z_3,t_2z_4,t_2z_5,t_1^{-1}t_2^{-1}z_6,t_1^{-1}t_2^{-1}z_7).
$$
One can easily check that this action is strongly stable, and by Proposition~\ref{propstst}
the quotient morphism $\pi: \KK^7 \to \KK^7/\!/T^2$ is the canonical quotient
presentation of a five-dimensional non-degenerate affine toric variety $X:=\KK^7/\!/T^2$.
Looking at closed $T^2$-orbits on $\KK^7$, one obtains that there are three
Luna strata
$$
X = (X \setminus Z) \ \cup \ (Z \setminus \{0\}) \ \cup \ \{0\},
$$
where $Z=\pi(W)$ and $W$ is a subspace in $\KK^7$ given by $z_4=z_5=z_6=z_7=0$.
In particular, $Z$ is the singular locus of $X$. Clearly,
$Z$ is isomorphic to an affine plane with coordinates $z_1z_3$ and $z_2z_3$. So, $Z$
is irreducible and smooth, but the groups $\Aut(X)^0$ (and $\Aut(X)$) has two orbits
on $Z$, namely, $Z \setminus \{0\}$ and $\{0\}$.
\end{example}

%%%%%%%%%%%%%%%%%%%%%%%%%%%%%%%%%%%%%%%%%%%%%%%%%%%%%%%%%%%%%

\section{Collective infinite transitivity}
\label{sec7}

Let $X$ be a non-degenerate affine toric variety of dimension $\ge 2$.
It is shown in \cite[Theorem~2.1]{AKZ} that for any positive integer $s$
and any two tuples of smooth pairwise distinct points $x_1,\ldots,x_s$
and $x_1',\ldots,x_s'$ on $X$ there is an automorphism $\phi\in\Aut(X)^0$
such that $\phi(x_i)=x_i'$ for $i=1,\ldots,s$. In other words, the
action of $\Aut(X)^0$ on the smooth locus $X^{\reg}$ is {\it infinitely
transitive}.

Our aim is to generalize this result and to prove collective infinite transitivity
along different orbits of some subgroup of the automorphism group.
Let us recall some general notions from~\cite{AFKKZ}.
Consider a one-dimensional algebraic group $H\cong(\KK,+)$ and a regular action
$H\times X \to X$ on an affine variety $X=\Spec A$. Then the associated derivation
$\partial$ of $A$ is locally nilpotent, i.e., for every $a\in A$ we can find
$n\in\NN$ such that $\partial^n(a)=0$. Any derivation of $A$ may be viewed as
a vector field on $X$. So we may speak about locally nilpotent vector fields~$\partial$.
We use notation $H=H(\partial)=\exp(\KK\partial)$.

It is immediate that for every $f\in\Ker\partial$
the derivation $f\partial$ is again locally nilpotent \cite[1.4, Pripciple~7]{Fr}.
A one-parameter subgroup of the form $H(f\partial)$ for some $f\in\Ker\partial$
is called a {\it replica} of $H(\partial)$.

A set $\cN$ of locally nilpotent vector fields on $X$ is said to be {\it saturated}
if it satisfies the following two conditions.
\begin{enumerate}
\item
$\cN$ is closed under conjugation by elements in $G$, where $G$ is the subgroup
of $\Aut(X)$ generated by all subgroups $H(\partial)$, $\partial\in\cN$.
\item
$\cN$ is closed under taking replicas, i.e., for all $\partial\in\cN$ and
$f\in\Ker\partial$  we have $f\partial\in\cN$.
\end{enumerate}

If $X=X_{\sigma}$ is toric, we define $\cAT(X)$ as the subgroup of $\Aut(X)$
generated by $H_e$, $e\in\mathcal{R}$, and all their replicas. As $\cN$ one can take
locally nilpotent vector fields corresponding to replicas of $H_e$ and all their conjugates.

\begin{theorem} \label{thcit}
Let $X$ be a non-degenerate affine toric variety. Suppose that
$x_1,\ldots,x_s$ and $x_1',\ldots,x_s'$ are points on $X$ with
$x_i\ne x_j$ and $x_i'\ne x_j'$ for $i\ne j$ such that for each $i$
the orbits $\cAT(X)\cdot x_i$ and $\cAT(X)\cdot x_i'$ are equal
and of dimension $\ge 2$. Then there exists an element $\phi\in\cAT(X)$
such that $\phi(x_i)=x_i'$ for $i=1,\ldots,s$.
\end{theorem}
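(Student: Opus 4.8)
The plan is to follow the general-position technique for saturated sets of locally nilpotent vector fields developed in \cite{AFKKZ}. Set $G=\cAT(X)$ and let $\cN$ be the associated saturated set, consisting of the locally nilpotent vector fields coming from replicas of the root subgroups $H_e$, $e\in\mathcal{R}$, together with all their $G$-conjugates. For each $i$ write $O_i=G\cdot x_i=G\cdot x_i'$ for the common orbit. Since $G$ is generated by the one-parameter unipotent subgroups $H(\partial)$, $\partial\in\cN$, each $O_i$ is an irreducible smooth locally closed $G$-invariant subvariety, and its tangent space at any point $p$ is the linear span of the values $\{\partial(p)\mid\partial\in\cN\}$; here saturatedness (closure under conjugation by $G$) is what makes this span exhaust $T_pO_i$. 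By hypothesis $\dim O_i\ge 2$. Passing to the diagonal $G$-action on $X^s$, I consider the irreducible smooth $G$-invariant subvariety $P:=O_1\times\cdots\times O_s$. Because the $x_i$ are pairwise distinct and likewise the $x_i'$, both tuples $\mathbf{x}=(x_1,\dots,x_s)$ and $\mathbf{x}'=(x_1',\dots,x_s')$ lie in $P$, and it suffices to show that they lie in a single diagonal $G$-orbit.

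The heart of the argument is an interpolation (general position) statement: for pairwise distinct points $p_1,\dots,p_s$ lying in orbits of dimension $\ge2$, the induced diagonal vector fields on $X^s$ span the whole tangent space $T_{\mathbf p}P=\bigoplus_i T_{p_i}O_i$ at $\mathbf p=(p_1,\dots,p_s)$. To see this, fix an index $i$ and a target $w_i\in T_{p_i}O_i$, choose $\partial\in\cN$ with $\partial(p_i)=w_i$ (possible since $\cN$ spans $T_{p_i}O_i$), and then pick $f\in\Ker\partial$ with $f(p_i)\ne 0$ and $f(p_j)=0$ for all $j\ne i$. The replica $f\partial\in\cN$ vanishes at every $p_j$, $j\ne i$, so the diagonal vector field it induces on $X^s$ is $\big(f(p_1)\partial(p_1),\dots,f(p_s)\partial(p_s)\big)$, which at $\mathbf p$ points in the direction $(0,\dots,f(p_i)w_i,\dots,0)$; the subgroup $H(f\partial)\subseteq G$ therefore fixes the $p_j$, $j\ne i$, and moves $p_i$ along $w_i$. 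Ranging over all $i$ and all directions $w_i$ gives the claimed spanning. The existence of separating invariants $f\in\Ker\partial$ is precisely where the hypothesis $\dim O_i\ge 2$ is indispensable: it forces the kernels $\Ker\partial$ to be large enough to separate the finitely many distinct points, and condition (2) of saturatedness guarantees $f\partial\in\cN$. I expect this separation lemma to be the main obstacle, and I would establish it along the lines of \cite[Section~2]{AFKKZ}, combining conjugates of several derivations whenever a single kernel fails to separate the configuration.

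Granting the interpolation statement, the diagonal orbit $G\cdot\mathbf{x}$ is a smooth locally closed subvariety of $P$ whose tangent space at $\mathbf{x}$ equals all of $T_{\mathbf{x}}P$; as $\dim(G\cdot\mathbf{x})=\dim P$ and $P$ is irreducible, $G\cdot\mathbf{x}$ is open in $P$, and the same holds for $G\cdot\mathbf{x}'$. In an irreducible variety every nonempty open set is dense, so two distinct open $G$-orbits would be disjoint dense open subsets, which is impossible. Hence $P$ carries a unique open $G$-orbit, and it contains both $\mathbf{x}$ and $\mathbf{x}'$. Consequently there is an element $\phi\in G=\cAT(X)$ with $\phi(\mathbf{x})=\mathbf{x}'$, that is, $\phi(x_i)=x_i'$ for $i=1,\dots,s$, which proves the theorem.
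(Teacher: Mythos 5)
Your overall strategy is the same as the paper's: reduce to the general machinery of \cite{AFKKZ} for groups generated by a saturated set of locally nilpotent vector fields (you re-derive the diagonal/interpolation argument that is \cite[Theorem~3.1]{AFKKZ} rather than citing it, but that is the same mechanism). The problem is that you leave unproved exactly the step where the theorem requires toric-specific input. Your argument hinges on finding, for a given $\partial\in\cN$ and pairwise distinct points $p_1,\dots,p_s$, an invariant $f\in\Ker\partial$ with $f(p_i)\ne 0$ and $f(p_j)=0$ for $j\ne i$. Since $\Ker\partial=\KK[X]^{H(\partial)}$ and invariants are constant on orbit closures, this is impossible whenever two of the points lie on the same one-dimensional $H(\partial)$-orbit, and more generally it requires knowing that $\KK[X]^{H(\partial)}$ separates the one-dimensional $H(\partial)$-orbits (on a suitable dense subset) --- the \emph{orbit separation property}. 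You explicitly flag this as ``the main obstacle'' and defer it to ``the lines of \cite[Section~2]{AFKKZ}'', but that section only gives general criteria; verifying the property for the root subgroups of an affine toric variety is precisely the content of the paper's proof, and without it your argument is incomplete.

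The paper closes this gap as follows: by \cite[Lemma~2.8]{AFKKZ} it suffices to check separation for the root subgroups $H_e$, $e\in\mathcal{R}$ (replicas and conjugates inherit it). For a fixed root $e$ one takes the one-parameter subtorus $R_e\subseteq T$ given by the primitive vector $v_e$; then $\KK[X]^{H_e}=\KK[X]^{R_e}$, every one-dimensional $H_e$-orbit is the closure of an $R_e$-orbit and contains a unique $R_e$-fixed point, and since $R_e$ is reductive its invariants separate distinct fixed points (their orbits are closed and disjoint). Hence any two one-dimensional $H_e$-orbits are separated by an $H_e$-invariant, which is the orbit separation property on all of $X$. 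If you add this argument (or an equivalent one) to your proposal, the proof is complete and coincides in substance with the paper's.
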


The proof of Theorem~\ref{thcit} is based on the following results.
Let $G$ be a subgroup of $\Aut(X)$ generated by subgroups $H(\partial)$, $\partial\in\cN$,
for some set $\cN$ of locally nilpotent vector fields and $\Omega\subseteq X$ be a
$G$-invariant subset.
We say that a locally nilpotent vector field $\partial$ satisfies the
{\it orbit separation property} on $\Omega$ if there is an $H(\partial)$-stable subset
$U(H)\subseteq\Omega$ such that
\begin{enumerate}
\item
for each $G$-orbit $O$ contained in $\Omega$, the intersection $U(H)\cap O$
is open and dense in $O$;
\item
the global $H$-invariants $\KK[X]^H$ separate all one-dimensional $H$-orbits in $U(H)$.
\end{enumerate}

Similarly we say that a set of locally nilpotent vector fields $\cN$
satisfies the {\it orbit separation property} on $\Omega$ if it holds
for every $\partial\in\cN$.

\begin{theorem} (\cite[Theorem~3.1]{AFKKZ}) \label{thAFKKZ}
Let $X$ be an irreducible affine variety and
$G\subseteq\Aut(X)$ be a subgroup
generated by a saturated set $\cN$ of locally nilpotent vector
fields, which has the orbit separation property on a $G$-invariant
subset $\Omega\subseteq X$. Suppose that $x_1,\ldots ,x_s$ and
$x_1',\ldots, x_s'$ are points in $\Omega$ with $x_i\ne x_j$ and
$x_i'\ne x_j'$ for $i\ne j$ such that for each $i$ the orbits
$G\cdot x_i$ and $G\cdot x_i'$ are equal and of dimension $\ge 2$. Then
there exists an element $g\in G$ such that $g\cdot x_i=x'_i$ for
$i=1,\ldots, s$.
\end{theorem}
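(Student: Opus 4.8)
The plan is to argue by induction on $s$, reducing the statement to a single relative transitivity step in which one point is moved inside its $G$-orbit while a prescribed finite set of points is held fixed. For $s=0$ there is nothing to prove. Assume the assertion for $s-1$ pairs and apply it to $x_1,\ldots,x_{s-1}$ and $x_1',\ldots,x_{s-1}'$; this produces $g_0\in G$ with $g_0\cdot x_i=x_i'$ for $i<s$. Since $g_0$ preserves every $G$-orbit, the point $y:=g_0\cdot x_s$ lies in the common orbit $O:=G\cdot x_s=G\cdot x_s'$, which has dimension $\ge 2$; after a harmless general-position adjustment of $g_0$ we may assume $y\notin\{x_1',\ldots,x_{s-1}'\}$. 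It therefore suffices to find $h\in G$ fixing $x_1',\ldots,x_{s-1}'$ and mapping $y$ to $x_s'$, for then $h g_0$ is the desired automorphism. This is an instance of the following relative statement: if $O\subseteq\Omega$ is a $G$-orbit of dimension $\ge 2$, if $p_1,\ldots,p_k$ are points, and if $y,y'\in O$ are distinct from one another and from the $p_j$, then there is $h\in G$ fixing every $p_j$ with $h\cdot y=y'$.

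To prove the relative statement I would consider the subgroup $G'\subseteq G$ fixing $p_1,\ldots,p_k$ and show that $G'\cdot y$ is dense in $O$. Elements of $G'$ are manufactured using saturation: for $\partial\in\cN$ and $f\in\Ker\partial$ with $f(p_j)=0$ for all $j$, the replica $f\partial$ again belongs to $\cN$, and since the vector field $f\partial$ vanishes at each $p_j$ its flow $H(f\partial)$ fixes all $p_j$; hence the groups $H(f\partial)$ with such $f$ generate a subgroup of $G'$. The decisive point is an infinitesimal transitivity property: at a general $y\in O$ the values $\partial(y)$, $\partial\in\cN$, span $T_yO$, which is standard once $O=G\cdot y$ and $\cN$ is closed under conjugation. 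Choosing $\partial_1,\ldots,\partial_d\in\cN$ with $d=\dim O$ and $\partial_l(y)$ spanning $T_yO$, together with invariants $f_l\in\Ker\partial_l$ that vanish at all $p_j$ but not at $y$, the constrained fields $f_l\partial_l\in\cN$ again span $T_yO$ while their flows $H(f_l\partial_l)$ lie in $G'$.

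Producing the invariants $f_l$ is where the orbit separation property and the hypothesis $\dim O\ge 2$ are used. By orbit separation the invariants $\KK[X]^{H(\partial_l)}$ separate the one-dimensional $H(\partial_l)$-orbits inside $U(H(\partial_l))$; using this, the freedom in the choice of $\partial_l\in\cN$, and the fact that $\dim O\ge 2$ lets one take $y$ in general position, one obtains $f_l\in\Ker\partial_l$ with $f_l(p_j)=0$ for all $j$ and $f_l(y)\ne 0$. Once the $f_l\partial_l$ span $T_yO$, the map $(t_1,\ldots,t_d)\mapsto\exp(t_d f_d\partial_d)\cdots\exp(t_1 f_1\partial_1)\cdot y$ has surjective differential at the origin, hence is dominant onto $O$, so $G'\cdot y$ contains a nonempty Zariski-open subset of the irreducible variety $O$. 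The same applies to $y'$, and two nonempty open subsets of the irreducible $O$ necessarily meet; therefore $G'\cdot y$ and $G'\cdot y'$ intersect, whence $G'\cdot y=G'\cdot y'$ and $y'\in G'\cdot y$, giving the required $h\in G'$ and completing the induction.

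The main obstacle is precisely the step of spanning $T_yO$ by the constrained fields $f_l\partial_l$, which are forced to vanish at the fixed points $p_j$: one must arrange that demanding $f_l(p_j)=0$ does not also force $f_l(y)=0$. This is exactly where all three hypotheses are indispensable. Saturation keeps the replicas $f_l\partial_l$ inside $\cN$, so that their flows belong to $G$; the orbit separation property supplies invariants distinguishing $y$ from the prescribed points, because $y$ and the $p_j$ lie on distinct $H(\partial_l)$-orbits once $y$ is general and their $G$-orbits are accounted for; and the assumption $\dim O\ge 2$ provides the room for this separation, a one-dimensional orbit making it impossible to fix one point and move a second point independently.
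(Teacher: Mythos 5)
Note first that the paper does not prove this statement at all: it is imported verbatim from \cite{AFKKZ} (Theorem~3.1 there) and used as a black box in the proof of Theorem~\ref{thcit}. So your attempt can only be measured against the proof in \cite{AFKKZ}, whose global architecture you have in fact correctly reconstructed: induction on $s$, reduction to moving one point while fixing the others, replicas $f\partial$ with $f\in\Ker\partial$ vanishing at the points to be fixed (your justification that their flows fix these points is correct precisely because $f\in\Ker\partial$ gives $(f\partial)^n(a)=f^n\partial^n(a)$), and dominance of a composition of flows once tangent vectors span.

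The genuine gap is in the one step you yourself flag as ``the main obstacle'': producing $f_l\in\Ker\partial_l$ with $f_l(p_j)=0$ and $f_l(y)\ne 0$ at the \emph{actual} points, not at generic ones. The orbit separation property only separates one-dimensional $H(\partial_l)$-orbits \emph{contained in} $U(H(\partial_l))$. Three configurations are left unhandled: (i) some $p_j$ may lie outside $U(H(\partial_l))$; (ii) some $p_j$ may be $H(\partial_l)$-fixed --- here no condition on $f_l$ is needed at all, since every replica of $\partial_l$ vanishes wherever $\partial_l$ does, a simplification you miss; (iii) most seriously, some $p_j$ may lie on the \emph{same} one-dimensional $H(\partial_l)$-orbit as $y$ --- nothing prevents several $x_j'$ from lying in the same $G$-orbit $O$ as $x_s'$ --- and then every $f\in\Ker\partial_l$ takes equal values at $p_j$ and $y$, so the requirement is unsatisfiable for that $\partial_l$. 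Your escape route, ``general position of $y$,'' is not available: $y=g_0\cdot x_s$ can be varied only by composing $g_0$ with elements fixing $x_1,\ldots,x_{s-1}$ (equivalently $x_1',\ldots,x_{s-1}'$), i.e.\ with elements of the very stabilizer subgroup whose richness is what you are trying to prove --- the ``harmless general-position adjustment'' is circular. (For mere distinctness $y\notin\{x_1',\ldots,x_{s-1}'\}$ no adjustment is needed anyway, since $g_0$ is injective.) Worse, ``the same applies to $y'$'' is asserted for $y'=x_s'$, a \emph{prescribed} point that cannot be moved at all; your argument yields fatness of $G'\cdot z$ only at points $z$ where the constrained fields span $T_zO$, and you give no mechanism forcing the prescribed endpoints to be such points, nor for connecting a bad endpoint to the unique fat orbit.

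This is exactly where the proof in \cite{AFKKZ} does its real work: instead of spanning the tangent space at the (possibly bad) endpoints, one connects $y$ to $y'$ by a chain of one-parameter flows inside $O$ and perturbs the chain, exploiting $\dim O\ge 2$ together with the openness and density of $U(H)\cap O'$ in every $G$-orbit $O'$ to place the intermediate points and the one-dimensional orbits actually used inside the good locus where separation applies; each factor $\exp(t\partial)$ is then replaced by $\exp(tf\partial)$ with $f\equiv 1$ on the orbit being used (so the motion is reproduced exactly, again via $(f\partial)^n=f^n\partial^n$) and $f\equiv 0$ on the orbits of the points to be fixed. Until your sketch supplies an argument of this kind for the prescribed endpoints and for hypothesis failures (i)--(iii), it is an outline of the known strategy rather than a proof.
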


\begin{proof}[Proof of Theorem~\ref{thcit}.]
Let us take $G=\cAT(X)$ and $\Omega=X$. In order to apply Theorem~\ref{thAFKKZ},
we have to check the
orbit separation property for one-parameter subgroups in $\cAT(X)$.
By~\cite[Lemma~2.8]{AFKKZ}, it suffices to check it for the subgroups
$H_e$, $e\in\mathcal{R}$.

\begin{proposition}
Let $e$ be a root of a cone $\sigma\subseteq N_\QQ$ of full dimension
and $X=X_{\sigma}$ be the corresponding affine toric variety. Then for any
two one-dimensional $H_e$-orbits $C_1$ and $C_2$ there is an invariant
$f\in\KK[X]^{H_e}$ with $f|_{C_1} =0$ and $f|_{C_2} =1$.
\end{proposition}

\begin{proof}
Let $R_e$ be a one-parameter subgroup of $T$ represented by the vector
$v_e\in N$. Then $\KK[X]^{H_e}=\KK[X]^{R_e}$; see~\cite[Section~2.4]{AKZ}.
Moreover, the subgroup $R_e$ normalizes but not centralizes $H_e$ in $\Aut(X)$
and every one-dimensional $H_e$-orbit $C\cong\AA^1$ is the closure of an $R_e$-orbit;
see~\cite[Proposition~2.1]{AKZ}. In particular, every one-dimensional $H_e$-orbit
contains a unique $R_e$-fixed point. Since the group $R_e$ is reductive, every two $R_e$-fixed
points can be separated by an invariant from $\KK[X]^{R_e}$. This shows that any two
one-dimensional $H_e$-orbits can be separated by an invariant from $\KK[X]^{H_e}$.
\end{proof}

This completes the proof of Theorem~\ref{thcit}.
\end{proof}

It follows from the proof of~\cite[Theorem~2.1]{AKZ} that the group $\cAT(X)$
acts (infinitely) transitively on the smooth locus of a non-degenerate affine
toric variety $X$. In particular, the open orbits of $\cAT(X)$ and
$\Aut(X)^0$ on $X$ coincides. The example below shows that this is not the case for
smaller orbits.

\begin{example}
Let $X_{\sigma}$ be the affine toric threefold defined be the cone
$$
\sigma=\rm{cone}(\tau_1,\tau_2,\tau_3), \quad v_{\tau_1}=(1,0,0),
\quad v_{\tau_2}=(1,2,0), \quad v_{\tau_3}=(0,1,2).
$$
We claim that all points on one-dimensional $T$-orbits of $X$ are $\cAT(X)$-fixed.
Indeed, suppose that a point $x$ on a one-dimensional $T$-orbit is moved by
the subgroup $H_e$ for some root $e$. Then $x$ belongs to a union of two
$H_e$-connected $T$-orbits. Assume, for example, that the $T$-orbit of $x$ corresponds
to the face $\rm{cone}(\tau_1,\tau_2)$ and the pair
of $H_e$-connected $T$-orbits includes the $T$-fixed point on $X$. By Lemma~\ref{lemcon},
we have
$$
\langle (1,0,0), e\rangle=0, \quad \langle (1,2,0), e\rangle=0,
\quad \langle (0,1,2), e\rangle=-1.
$$
These conditions imply $\langle (0,0,1), e\rangle=-1/2$, a contradiction.

If the pair of $H_e$-connected $T$-orbits includes the two-dimensional orbit, then
either
$$
\langle (1,0,0), e\rangle=0, \quad \langle (1,2,0), e\rangle=-1, \quad \text{or} \quad
\langle (1,0,0), e\rangle=-1, \quad \langle (1,2,0), e\rangle=0.
$$
In both cases we have $\langle (0,1,0), e\rangle=\pm 1/2$, a contradiction.

Other possibilities may be considered in the same way.
\end{example}

%%%%%%%%%%%%%%%%%%%%%%%%%%%%%%%%%%%%%%%%%%%%%%%%%%%%%%%%%%%%%%%

\section*{Acknowledgement}

The first author is grateful to J\"urgen Hausen and Mikhail Zaidenberg
for useful discussions and suggestions.

%%%%%%%%%%%%%%%%%%%%%%%%%%%%%%%%%%%%%%%%%%

%
\end{document}